\documentclass[a4paper,11pt,twoside,reqno]{amsart}

\usepackage[english]{babel}
\usepackage[utf8]{inputenc}

\usepackage[margin=3.2cm]{geometry}
\usepackage{enumitem}
\usepackage{bold-extra}
\usepackage{ mathrsfs }

\usepackage[dvipsnames]{xcolor}
\usepackage[
 colorlinks=true,
 linkcolor=MidnightBlue,
 citecolor=MidnightBlue,
 urlcolor=MidnightBlue
]{hyperref}

\usepackage{amsmath}
\usepackage{amsfonts}
\usepackage{amssymb}
\usepackage{amsthm}
\usepackage{comment}
\usepackage{mathtools}

\newtheorem{theorem}{Theorem}[section]
\newtheorem{corollary}[theorem]{Corollary}

\newtheorem{lemma}[theorem]{Lemma}

\theoremstyle{definition}

\theoremstyle{remark}
\newtheorem{remark}[theorem]{Remark}

\numberwithin{equation}{section}
\newcommand\eps{\varepsilon}
\newcommand\ve{\varepsilon}

\newcommand\R{\mathbb{R}}
\newcommand\Z{\mathbb{Z}}
\newcommand\ZZ{\mathbb{Z}}
\newcommand\ZZp{\mathbb{Z}_{\text{prim}}}
\newcommand\supp{\operatorname{supp}}
\newcommand\ord{\operatorname{ord}}

\newcommand\NN{\mathbb{N}}

\newcommand\Q{\mathbb{Q}}
\newcommand\QQ{\mathbb{Q}}
\newcommand\PP{\mathbb{P}}
\newcommand\K{\overline{\mathbb{Q}}}

\DeclareMathOperator\rad{rad}
\DeclareMathOperator\rank{rank}
\DeclareMathOperator\Res{Res}
\renewcommand\div{\mathrm{div}}

\renewcommand{\leq}{\leqslant}
\renewcommand{\le}{\leqslant}
\renewcommand{\geq}{\geqslant}
\renewcommand{\ge}{\geqslant}
\DeclarePairedDelimiter\abs{\lvert}{\rvert}

\title{Sums of three powerful numbers}
\author{Tim Browning}
	\address{IST Austria\\
		Am Campus 1\\
		3400 Klosterneuburg\\
		Austria}
	
	\email{tdb@ist.ac.at\\
		matteo.verzobio@gmail.com}	
	\author{Matteo Verzobio}
\date{}
 \subjclass{11D45 (11D41, 11G35, 11G50, 14G05)}

\begin{document}

\begin{abstract}
Let $p,q,r\geq 2$ and consider the Campana orbifold
\[
\left(
 \PP^1,
 \left(1-\tfrac1p\right)[0]
 +\left(1-\tfrac1q\right)[1]
 +\left(1-\tfrac1r\right)[\infty]
\right).
\]
Primitive positive Campana points on this orbifold correspond to
solutions of $a+b=c$ in which $a$, $b$, and $c$ are respectively
$p$-full, $q$-full, and $r$-full. We establish upper
bounds for the number of such points of bounded height in a broad
range of exponents, with a power-saving over the trivial bound. 
The main analytic input is an estimate
for primitive integral points in lopsided boxes on generalized
Fermat surfaces
\[
a_1x^p+a_2y^q+a_3z^r=0,
\]
which is uniform in the coefficients.
\end{abstract}

\maketitle

\thispagestyle{empty}
 \setcounter{tocdepth}{1}
 \tableofcontents

\section{Introduction}

The arithmetic of Campana points provides a natural framework
interpolating between the study of rational points and that of
integral points on algebraic varieties. 
In the log-Fano setting, 
a Manin-type conjectural framework for the density of Campana points has been worked out by 
Pieropan--Smeets--Tanimoto--V\'arilly-Alvarado~\cite{c-manin}, with 
recent refinements by 
Chow--Loughran--Takloo-Bighash--Tanimoto 
\cite{chow}. 
In this paper we consider one of the simplest, albeit 
highly
nontrivial, examples. For integers $p,q,r\geq2$, define the $\QQ$-divisor
\[
\Delta_{p,q,r}
 =
 \left(1-\frac1p\right)[0]
 +\left(1-\frac1q\right)[1]
 +\left(1-\frac1r\right)[\infty]
\]
on $\PP^1$. Then 
$(\PP^1,\Delta_{p,q,r})$ is an orbifold 
in the sense of Campana~\cite{campana}.

Recall that a nonzero integer $n$ is called $m$-full if
$v_\ell(n)=0$ or $v_\ell(n)\geq m$, 
for every prime $\ell$. 
For $a,b,c\in\NN$, 
a primitive solution to 
$a+b=c$ is automatically pairwise coprime. The map
$(a,b,c)\mapsto (a:c)\in\PP^1(\QQ)$
then identifies primitive solutions for which $a$
is $p$-full, $b$ is $q$-full, and $c$ is $r$-full 
with the Campana points of
 $(\PP^1,\Delta_{p,q,r})$.
Indeed, at
each finite prime $\ell$, the local intersection multiplicities with
the divisors $[0]$, $[1]$, and $[\infty]$ are, respectively, 
$v_\ell(a), v_\ell(b)$, and $v_\ell(c)$.
Let $\mathcal S_m$ denote the set of positive $m$-full integers and
write
\begin{equation}\label{eq:hawk}
N(B)
 =
 \#\left\{
 (a,b,c)\in
 (\mathcal S_p\times\mathcal S_q\times\mathcal S_r)\cap[1,B]^3:
 \gcd(a,b,c)=1,\ a+b=c
 \right\}.
\end{equation}
Since $a,b>0$ and $a+b=c$, the usual height of the corresponding
point $(a:c)$ is $c$. Thus $N(B)$ counts the positive Campana points
on $(\PP^1,\Delta_{p,q,r})$ of height at most $B$.

The geometry of the orbifold is governed by
the divisor 
$K_{\PP^1,\Delta_{p,q,r}}=K_{\PP^1}+\Delta_{p,q,r}$, with degree
\[
\deg(K_{\PP^1,\Delta_{p,q,r}})
 =
 1-\left(\frac1p+\frac1q+\frac1r\right).
\]
If $-K_{\PP^1,\Delta_{p,q,r}}$ is ample then 
 $(\PP^1,\Delta_{p,q,r})$ is log-Fano and subject to the conjectures in~\cite{chow,c-manin}.
This is the case when $p=q=r=2$, for example, in which case 
it is conjectured that $N(B)\sim c B^{1/2}$, as $B\to \infty$, for a suitable constant $c>0$. 
The best result we have in this direction is the upper bound 
\[
N(B)\ll_\delta B^{3/5-\delta},
\] 
for any $\delta<\frac{3}{1555}$, which is due to Heath-Brown~\cite{HB-squarefull} and which refines earlier work of 
Browning and Van Valckenborgh~\cite{squareful}. 
In the case $p=q=r$, 
as a direct extension of the latter work, 
the balanced dyadic range suggests the exponent
\[
\frac{6(r-1)}{3r^2-r}
=
\frac2r-\frac{4}{r(3r-1)},
\]
although 
controlling the lopsided ranges would require additional
input.

In this note we place ourselves in the log-general type range, where
\begin{equation}\label{eq:gen}
\frac1p+\frac1q+\frac1r<1.
\end{equation}
As explained by Abramovich and Várilly-Alvarado~\cite[Conjecture 1.2]{alvarado}, 
it follows from conjectures of 
Campana that the corresponding Campana points
are not Zariski dense in $\PP^1$ in this case, so that $N(B)=O_{p,q,r}(1)$.
This prediction is also consistent with
the $abc$ conjecture: if $(a,b,c)$ is a solution counted by $N(B)$, 
note that $\rad(abc)\leq a^{1/p}b^{1/q}c^{1/r}\leq c^{1/p+1/q+1/r}$.
But then 
\[
c\ll_\varepsilon \rad(abc)^{1+\varepsilon}
 \leq c^{(1/p+1/q+1/r)(1+\varepsilon)},
\]
for any $\ve>0$. 
Since $(1/p+1/q+1/r)(1+\varepsilon)<1$ 
 whenever 
\eqref{eq:gen} holds, 
if $\ve>0$ 
is taken to be small enough, 
this bounds $c$.

Since such a finiteness result seems to be out of reach presently, 
our goal is to obtain quantitative evidence by proving upper 
bounds for $N(B)$, with 
a focus on the most difficult case, in which $p\geq q\geq r\geq 2$ are all distinct.
Since
$\#\mathcal S_m\cap[1,B]=O_m(B^{1/m})$, we always have the
estimate 
\begin{equation}\label{eq:triv}
N(B)\ll_{p,q}B^{1/p+1/q},
\end{equation}
if $p\geq q\geq r$, 
which we refer to as the trivial bound. 
In Theorem~\ref{prop:etagen}
we shall establish explicit criteria under which this bound admits a power
saving. One clean consequence is the following result.

\begin{theorem}\label{t:uv-main}
Let $u\geq v\geq0$ be integers and take 
$p=r+u$ and $q=r+v$.
Then, for all sufficiently large $r$, there exists an explicit
$\eta_{u,v}(r)>0$ such that
\[
N(B)
 \ll_{\varepsilon,u,v,r}
 B^{1/p+1/q-\eta_{u,v}(r)+\varepsilon},
\]
where
\[
\eta_{u,v}(r)
 =
 \frac1{r^2}+O_{u,v}(r^{-5/2}).
\]
\end{theorem}

Note that the case $p=q=r$ corresponds to taking $u=v=0$ in this result, in which case the argument behind the result actually yields $N(B) \ll_{\varepsilon,r} B^{2/r-\eta_r+\ve}$, where $\eta_r>0$ and satisfies
$\eta_r=\frac1{r^2}+O(r^{-5/2}) $. Thus our result gives a uniform power saving even in the fully symmetric case, 
although it does not quite reach the balanced exponent suggested above.
Our work is particularly effective in non-symmetric cases where
$p,q,r$ have a similar size. In the special case 
$(p,q,r)=(r+2,r+1,r)$, for example, we shall show in  Remark \ref{rem:rem'} that our method beats the trivial bound for all $r\geq 2$.

\medskip

The main analytic input is a uniform estimate for integral points on
generalized Fermat surfaces in rectangular boxes. Let
$a_1,a_2,a_3$ be nonzero integers and put
\begin{equation}\label{eq:f}
f(x,y,z)=a_1x^p+a_2y^q+a_3z^r.
\end{equation}
For $X,Y,Z\geq2$, define
\begin{equation}\label{eq:hen}
S(X,Y,Z,f)
 =
 \left\{
 (x,y,z)\in\ZZp^3:
~|x|\leq X,~ |y|\leq Y,~ |z|\leq Z,~
 f(x,y,z)=0
 \right\},
\end{equation}
where $\ZZp^3=\{(x,y,z)\in \ZZ^3: \gcd(x,y,z)=1\} $.
Let 
\begin{equation}\label{eq:W}
	W=\exp\left(\sqrt{\frac{\log X \log Y}{r}}\right).
\end{equation}
We shall prove the following result. 

\begin{theorem}\label{thm:main2}
	Let $\eps>0$ and let 
	 $f$ be given by~\eqref{eq:f} for 
	 $p,q,r\geq 2$
	 and $a_1,a_2,a_3\in \ZZ_{\neq 0}$.
	 Then 
\[
		\#S(X,Y,Z,f)\ll_{\ve,p,q,r} (XYZ)^{\ve}\left(W^2+W\max\{X,Y\}^{\frac{2}{\sqrt{\max\{p,q,36\}}}}+W\max\{X,Y\}^{\frac{1}{r}}\right).
\]
\end{theorem}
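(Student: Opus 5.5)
We bound $\#S(X,Y,Z,f)$ by fibering over pairs of small prime moduli, in the spirit of the determinant method and of Heath-Brown's treatment of powerful numbers.

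\emph{Reduction to $(x,y)$.} Since $f(x,y,z)=0$ and $z\neq 0$ away from a negligible set, the value $a_3z^r=-(a_1x^p+a_2y^q)$ determines $z$ up to at most $r$ choices. So it suffices to count primitive pairs $(x,y)$ with $|x|\le X$, $|y|\le Y$ for which $-(a_1x^p+a_2y^q)/a_3$ is an $r$-th power. Pairs with $xy=0$ contribute $O((XYZ)^\ve)$ by the divisor bound, so we assume $xy\ne 0$.

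\emph{Covering by congruence classes.} Choose primes $\ell_1\asymp W^{\alpha}$ and $\ell_2\asymp W^{\beta}$ not dividing $a_1a_2a_3$, with $\alpha,\beta$ balanced so that $\log\ell_1\log\ell_2\approx \log X\log Y/r$; this balancing is where the shape of $W$ comes from. Modulo $\ell_i$, each solution reduces to a point of the affine curve $C_i: a_1x^p+a_2y^q+a_3z^r=0$ over $\mathbb F_{\ell_i}$, which has $O(\ell_i)$ points. Fixing the reductions of $(x,y)$ modulo $\ell_1$ and $\ell_2$ gives $O(\ell_1\ell_2)$ residue classes.

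\emph{Counting inside one class.} Within a single class we apply a Bombieri--Pila / Heath-Brown determinant argument to the surface $f=0$ in the lopsided box. All points in the class lie on one auxiliary surface $g=0$ of degree $d$ not divisible by $f$, provided $d$ is large compared with $\log(XYZ)/\log(\ell_1\ell_2)$ in the appropriate weighted sense. The count then reduces to bounding points on the curves $f=g=0$. For these curves we use Bombieri--Pila: a component of degree $e$ contributes $\ll \max\{X,Y\}^{1/e+\ve}$ points. Components of low degree are rare, and this is the source of the terms $\max\{X,Y\}^{2/\sqrt{\max\{p,q,36\}}}$ and $\max\{X,Y\}^{1/r}$. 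The exponent $1/r$ arises from components that are essentially lines in $(x,y)$ with $z$ determined as an $r$-th root. The exponent $2/\sqrt{\cdot}$ comes from a genus or degree bound for curves on the Fermat-type surface, of the form: every curve of degree $e<\sqrt{p}$ lies in a special family. Summing over the $O(W^2)$ residue classes, together with a $W$-weighted contribution from the components, gives the three terms $W^2$, $W\max\{X,Y\}^{2/\sqrt{\max\{p,q,36\}}}$ and $W\max\{X,Y\}^{1/r}$.

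\emph{Main obstacle.} The hardest step is uniformity in $a_1,a_2,a_3$, and in particular classifying the low-degree curves on the surface $f=0$. We must show that no curve of degree below about $\sqrt{\max\{p,q\}}$ exists, apart from the coordinate-type curves. This requires a Hurwitz or Plücker genus argument on the cyclic cover $z^r=\cdots$. The constant $36$ suggests the argument only works above a threshold degree, so we take the maximum with $36$ to guarantee it. I would handle this by parametrizing a putative low-degree curve and comparing ramification over $x^p$ and $y^q$.
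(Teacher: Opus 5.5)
There is a genuine gap. The proposal never supplies the step that actually bounds the points on the curves cut out by the auxiliary surfaces, and the counting framework around it would not give the stated shape.

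\emph{The per-curve bound.} Your claim that a component of degree $e$ contributes $\max\{X,Y\}^{1/e+\ve}$ is not available in a lopsided box. Over a line $y=ux+v$ with $uv\neq0$, the curve $f=c=0$ is irreducible of degree $\max\{p,q,r\}$ in space, yet its projection to the $(x,y)$-plane is a line. What one can prove there is $\max\{X,Y\}^{1/r}$. Your proposed classification (no curves of degree below $\sqrt{\max\{p,q\}}$ on $f=0$ other than coordinate-type ones) fails either way. If degree means the degree of the plane projection, it is false, since every line gives such a curve. If it means the degree in space, it is not the quantity that controls the count.

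What governs the count is the degree $d$ of the $\QQ$-irreducible plane curve $c(x,y)=0$, together with whether $F(T)=a_3T^r+a_1x^p+a_2y^q$ is irreducible over $K=\overline{\QQ}(C)$. If $F$ is irreducible, then $z$ has degree $r$ over $K$. Projecting to $(x+\lambda y,z)$ gives an absolutely irreducible plane curve of $T$-degree $rd$, and Heath-Brown's lopsided estimate yields $B^{\ve}\max\{X,Y\}^{1/(dr)}$. If $F$ is reducible, Capelli's criterion gives $a_1x^p+a_2y^q+h^\ell=0$ in $K$ for some $\ell\geq2$. Apply the Brownawell--Masser theorem to $1+u+v=0$ with $u=a_1x^p/(a_2y^q)$, using $g(C)\leq(d-1)(d-2)/2$ and $|S|\leq H/\ell+3d$; this gives $H_C(u)\leq 2d^2$. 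A point where $x$ but not $y$ has a zero or pole (and vice versa) then forces $\max\{p,q\}\leq 4d^2$. The case of comparable supports is disposed of separately by a divisibility argument. Bombieri--Pila on $c=0$ in the square of side $\max\{X,Y\}$ then gives $\max\{X,Y\}^{2/\sqrt{\max\{p,q\}}+\ve}$. This function-field $abc$ step is the key idea your proposal lacks; a Hurwitz or Pl\"ucker count on the cyclic cover does not substitute for it.

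\emph{The bookkeeping.} Modulo $\ell_i$, the equation $f=0$ defines a surface with $\asymp\ell_i^2$ points, not a curve with $O(\ell_i)$ points. You also switch between $O(\ell_1\ell_2)$ and $O(W^2)$ classes. If each of $W^2$ classes carries its own auxiliary surface, every curve contribution is multiplied by $W^2$, and you cannot reach $W\max\{X,Y\}^{\cdots}$. In the paper, $W$ and $W^2$ have separate sources. Salberger's global determinant method, in a lopsided form, gives $O(B^\ve W)$ auxiliary polynomials $f_j$ coprime to $f$ and of degree $O(\log B)$, plus an exceptional set of $O(B^\ve W^2)$ points. Then $z$ is eliminated via $\Res_z(f,f_j)$, and the result is factored into $\QQ$-irreducible curves $c(x,y)$. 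Because these degrees can grow like $\log B$, a Bombieri--Pila bound with explicit dependence on the degree is needed when $\deg c\geq\max\{p,q,r\}$.

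\emph{Low degrees and the constant $36$.} Since $\max\{p,q\}\leq 4d^2$ says nothing for $d\leq2$, lines and conics need their own arguments, each giving at most $B^\ve\max\{X,Y\}^{1/r}$:
\begin{itemize}
\item Mason--Stothers shows $F$ is irreducible over a line.
\item Non-parabolic conics are handled by the Vaughan--Wooley bound.
\item Parabolas through the origin are handled by a divisibility argument on the parametrisation.
\item For the remaining parabolas, a further Mason--Stothers argument shows $F$ splits at worst into two factors of degree $r/2$.
\end{itemize}
For $3\leq d<\max\{p,q,r\}$, Bombieri--Pila gives $\max\{X,Y\}^{1/3}$ in any case. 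The identity $\min\{1/3,2/\sqrt{\max\{p,q\}}\}=2/\sqrt{\max\{p,q,36\}}$ is the origin of $36$; it is not a threshold in a genus argument. Finally, uniformity in $a_1,a_2,a_3$ comes from coefficient-uniform curve bounds after reducing to $H(f),H(c)\leq B^{O(1)}$, not from a classification of low-degree curves.
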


In this result, the exponent $r$ and the variable $z$ are distinguished, but one gets corresponding estimates 
under any permutation of the three pairs $(x,p)$, $(y,q)$ and $(z,r)$.
An important feature of this result is that the implied constant does not depend on the coefficients $a_1,a_2,a_3$ of $f$. 

Let $B=\max\{X,Y,Z\}$. Then
$W\leq B^{1/\sqrt{r}}$ in~\eqref{eq:W}, and Theorem~\ref{thm:main2} implies
\[
\#S(X,Y,Z,f)\ll_{\ve,p,q,r}
B^{\frac{1}{\sqrt{p}}+\frac{1}{\sqrt{q}}+\frac{1}{\sqrt{r}}+\ve}.
\]
Thus, one of the main features of our work is that
$\#S(X,Y,Z,f)$ can be bounded by a power of $B$ whose exponent
tends to zero as $p$, $q$, and $r$ jointly tend to infinity.
(In fact it is enough that any two of the exponents  tend to infinity.)
Moreover, the assumption $a_1a_2a_3\neq0$ is clearly necessary,
since otherwise $\#S(X,Y,Z,f)$ can be of order $B$.

If one does not care about uniformity, then in the range~\eqref{eq:gen}, it follows from work of Darmon and Granville~\cite[Theorem 2]{darmon} that the equation $f(x,y,z)=0$ has only finitely many primitive integral solutions.
In the range $1/p + 1/q + 1/r > 1$, 
it follows from work of Beukers~\cite{beukers} that the equation 
$f(x,y,z)=0$ has 
infinitely many nontrivial primitive solutions as soon as it has at least one. 
Recently, Arango-Pi\~neros~\cite{arango-counting}
used an alternative height ordering to 
 obtain an asymptotic formula for the number of primitive solutions in the range $1/p+1/q+1/r>1$. Finally, when $1/p + 1/q + 1/r = 1$, primitive solutions correspond to rational points on certain curves of genus one, and their behaviour depends on the associated Mordell–Weil group~\cite[Section 6]{darmon}. All these results are non-uniform, whereas Theorem~\ref{thm:main2} does not depend on the coefficients of $f$.

There are several other ways that one can bound $\#S(X,Y,Z,f)$ uniformly in the coefficients. For example, one can fix the $z$-variable and then bound the points on the remaining affine curve
using Heath-Brown~\cite[Theorem 15]{HeathBrown}. This yields
\[
\#S(X,Y,Z,f)\ll_{\ve,p,q,r}(XY)^{\ve}Z\min\{X^{1/q},Y^{1/p}\}.
\]
Alternatively, one can use basic Fourier analysis, as in 
Bernert--Browning--Lichtman--Ter\"av\"ainen 
\cite[Proposition 3.1]{abc}. This would yield a bound of the form 
\[
\#S(X,Y,Z,f)\ll_{\ve,p,q,r}(XYZ)^{1/2+\ve}. 
\]
Theorem~\ref{thm:main2}
 is stronger when $p$, $q$, and $r$ are not too small.
Our proof of it combines Salberger's determinant method~\cite{saldet}
 with function field
Diophantine geometry. The determinant method reduces the problem to
counting points on curves
\[
f(x,y,z)=c(x,y)=0.
\]
Writing $C$ for the normalization of $c(x,y)=0$, the curve $f=c=0$
is geometrically reducible only when
$a_1x^p+a_2y^q$
is a proper power in $\overline{\QQ}(C)$. The Brownawell--Masser
theorem then forces the plane degree of $C$ to be large compared with
$p$ and $q$, making applications of Heath-Brown~\cite{HeathBrown} and 
Bombieri--Pila~\cite{BombieriPila} 
particularly effective. 

In the case $X=Y=Z=B$, the bound can be slightly improved in several aspects: (i) following Castryck--Cluckers--Dittmann--Nguyen~\cite{nolog}, one can replace the $B^{\ve}$ factor with a small power of $\log B$;
(ii)
 following Ellenberg--Venkatesh~\cite{ellenberg}, one can obtain a small power 
 saving in the height 
 $\max\{\abs{a_1},\abs{a_2},\abs{a_3}\}$
 of $f$; and (iii) 
 following Binyamini--Cluckers--Kato~\cite{quadratic}, the precise dependence of these results
 on $\max\{p,q,r\}$ can be tracked.
 
\section{Counting points on codimension one slices}

Recall that $X,Y,Z\geq 2$ and let $B=\max\{X,Y,Z\}$. 
Our main task will be to estimate the quantity
\begin{equation}\label{eq:duck}
S(X,Y,Z,f,c)
 =
 \left\{
 (x,y,z)\in\ZZp^3:
 \begin{array}{l}
|x|\leq X,~ |y|\leq Y,~ |z|\leq Z,\\
 f(x,y,z)=c(x,y)=0
 \end{array}
 \right\},
\end{equation}
for given $c\in \Z[x,y]$. Indeed, once
combined with the determinant method, 
the following result will prove to be the principal ingredient in the proof of Theorem~\ref{thm:main2}.

\begin{theorem}\label{thm:main}
Let $D\geq 1$ and let $\eps>0$. Let $p,q,r\geq 2$ satisfy $p,q,r\leq D$, let 
$f$ be given by~\eqref{eq:f} for nonzero $a_1,a_2,a_3\in \ZZ$. 
Let $c(x,y)\in\Z[x,y]$ be of degree $d\leq D$ and irreducible over $\Q[x,y]$.
If $d\in \{1,2\}$ then
\begin{equation}\label{eq:main-bound2}
\#S(X,Y,Z,f,c)\ll_{\ve,D} B^{\ve}\max\{X,Y\}^{\frac{1}{r}}.
\end{equation}
If $d\geq 3$ then 
\begin{equation}\label{eq:main-bound}
\#S(X,Y,Z,f,c)\ll_{\ve,D} \max\{X,Y\}^{\frac{1}{m(p,q)}+\ve}+B^{\ve}\max\{X,Y\}^{\frac{1}{dr}},
\end{equation}
where
\begin{equation}\label{eq:mpq}
m(p,q)=\frac{\sqrt{\max\{p,q\}}}{2}.
\end{equation}
\end{theorem}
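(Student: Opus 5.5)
The proof will split according to whether the space curve $V=\{f=c=0\}$ is geometrically irreducible. Points of $S(X,Y,Z,f,c)$ project to points $(x,y)$ on the plane curve $c(x,y)=0$, and $z$ satisfies $a_3z^r=-(a_1x^p+a_2y^q)$, so each $(x,y)$ has at most $2$ lifts. Let $C$ be the normalization of $\{c=0\}$ and let $g=a_1x^p+a_2y^q\in\overline{\QQ}(C)$.

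\textbf{Case 1: $V$ reducible.} Over $\overline{\QQ}$, the curve $V$ is a cover of $C$ given by $z^r=-g/a_3$, and it is geometrically reducible exactly when $g$ is an $\ell$-th power in $\overline{\QQ}(C)$ for some prime $\ell\mid r$. In that case every $\overline{\QQ}$-component of $V$ maps birationally to a component of $z^{r/\ell}=\zeta\,h$ with $h^\ell=-g/a_3$. We then count points through the $z$-coordinate. If $\{c=0\}$ is a line or a conic ($d\le2$), it is rational and can be parametrized by binary forms of degree $\le 2$ in $(s,t)$. The points are then integral points with $z^r$ equal to a fixed-degree form. The count is at most the number of $z$ with $|z|^r\ll \max\{X,Y\}^{O(1)}$, but more efficiently each admissible $z$ determines $O_D(1)$ points, while $|z|^r\le |a_1x^p+a_2y^q|/|a_3|$. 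I would instead bound by the number of pairs $(x,y)$ on a curve $z^r=h(s,t)$ using Bombieri--Pila / Heath-Brown \cite{HeathBrown} for the curve of degree $\ge r$ in the box: the projection to the $(z,\cdot)$ plane gives degree $r$ (resp.\ $dr$) curves, and Bombieri--Pila yields $B^{\eps}\max\{X,Y\}^{1/r}$ (resp.\ $\max\{X,Y\}^{1/(dr)}$). Concretely, when the component is geometrically irreducible over $\QQ$ of degree $\ge dr$ in suitable coordinates with coordinates bounded by $\max\{X,Y\}$, Bombieri--Pila gives the second terms of \eqref{eq:main-bound2} and \eqref{eq:main-bound}. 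Components not defined over $\QQ$ contribute $O_D(1)$ rational points, by intersecting with a conjugate.

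\textbf{Case 2: the key dichotomy, and the main obstacle.} If $g$ is not an $\ell$-th power, then $V$ is geometrically irreducible. Its degree in the projection to the $(x,y)$-plane is just $d$, however, so we need a better handle. The idea is that rational points on $V$ give points on $C$ where $g$ takes values in $a_3\cdot(\text{$r$-th powers})$. I would count these by projecting $V$ to a plane curve with both $x$ and $z$ present and applying Heath-Brown's Theorem~15 in its lopsided-box form. This gives roughly $\max\{X,Y\}^{1/\deg}$. When $g$ \emph{is} a proper power, the Brownawell--Masser theorem (the $abc$ for function fields) applied to $a_1x^p+a_2y^q=g$ on $C$ shows that $\deg_{\rm plane} C=d$ must be at least about $\sqrt{\max\{p,q\}}/2=m(p,q)$. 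Otherwise $\#\{\text{zeros/poles}\}$ of $x,y$, which is $O(d)$, would be too small compared with $\max\{p,q\}/d$. In this situation the plane curve $c=0$ has degree $d\ge m(p,q)$, and Bombieri--Pila gives $\max\{X,Y\}^{1/d+\eps}\le \max\{X,Y\}^{1/m(p,q)+\eps}$.

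I expect the main difficulty to be the Brownawell--Masser step. The key inequality should be $\max\{p,q\}\le \ell\cdot \max\{p,q\}\cdot(\text{stuff})$ ... more precisely, applying Mason--Stothers on $C$ of genus $\le d^2$ to $a_1x^p+a_2y^q=h^\ell$ should give $p\deg x\le 2g-2+\#S\ll d^2$, hence $d\gg\sqrt{p}$. The step needing care is making this give exactly $d\ge 2^{-1}\sqrt{\max\{p,q\}}$, including the case where $x^p,y^q$ are dependent. I would first treat $d\le 2$ separately, since such curves are rational: a nonconstant $x$ has degree $\ge1$ on $C$, and the theorem excludes power relations unless degenerate. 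That leaves only the $\max\{X,Y\}^{1/r}$ term in \eqref{eq:main-bound2}.
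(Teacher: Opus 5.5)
Your outline for $d\ge 3$ has the same shape as the paper's. For an irreducible cover you project to a plane curve through $z$ and apply Heath-Brown's lopsided bound. For a reducible cover, function-field $abc$ forces $d\ge m(p,q)$, and Bombieri--Pila on $c=0$ finishes. There is, however, a genuine gap: you never use $\gcd(x,y,z)=1$, and the statement is false without it.

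Take $c=y-x^3$, $p=r=3q$ with $q\ge 13$, and $(a_1,a_2,a_3)=(1,1,-2)$. Every $(x,x^3,x)$ lies on $f=c=0$, giving $\gg X$ non-primitive points with $\max\{X,Y\}=X^3$. The right-hand side of \eqref{eq:main-bound} is $X^{6/\sqrt{3q}+\varepsilon}+X^{1/r+\varepsilon}=o(X)$. Likewise $c=y-x^2$ with $(p,q,r)=(4,2,4)$ defeats \eqref{eq:main-bound2}.

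The failure sits in your Brownawell--Masser step, which you flag but do not resolve. The $S$-unit theorem bounds $H_C(u)$ for $u=a_1x^p/(a_2y^q)$, not $p\deg x$. Cancellation between the divisors of $x^p$ and $y^q$ can make $u$ of small height or even constant (in the example $u\equiv 1$), so no lower bound on $d$ follows.

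The missing idea is the paper's preliminary reduction. Suppose $\supp(\operatorname{div}(x))\subseteq\supp(\operatorname{div}(y))$ (or the reverse). Then $c(0,y)=\delta y^e$, and primitivity gives $\gcd(x_0,y_0)\mid a_3$, hence $x_0\mid \delta a_3^e$. The divisor bound then leaves $O(B^\varepsilon)$ points. Once the supports are incomparable, $a_1x^p+a_2y^q\not\equiv 0$ on $C$, and there is a point $P$ with $\ord_P(x)\neq 0=\ord_P(y)$. This gives $p\le|\ord_P(u)|\le 2H_C(u)$. Combined with $H_C(u)\le 2g-2+|S|$, $|S|\le H_C(u)/\ell+3d$ and $2g-2\le d^2-3d$, it yields $\max\{p,q\}\le 4d^2$.

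The case $d\le 2$ is not actually handled either. If $-(a_1x^p+a_2y^q)/a_3=h^\ell$ on $C$, each component of $V$ has degree $r/\ell$ over $C$. Your projection argument then gives only $\max\{X,Y\}^{\ell/(dr)}$, which is admissible only when $\ell\le d$. Your assertion that power relations are excluded ``unless degenerate'' is exactly what needs proof, and by the examples above it again requires primitivity. The paper proceeds as follows:
\begin{itemize}
\item Vertical and horizontal lines are treated directly. On a vertical line $x$ is constant, so projecting to a plane containing $x$ and $z$ is useless; one uses $(y,z)$, or a generic $w=x+\lambda y$ as in the general case, together with primitivity when $x_0=0$.
\item Lines through the origin fall under the comparable-support reduction. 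For the remaining lines, Mason--Stothers shows $F(T)$ is irreducible.
\item Ellipses and hyperbolas give $O(B^\varepsilon)$ points by Vaughan--Wooley.
\item Parabolas through the origin are handled by a divisor argument using $\gcd(x,y)\mid a_3$.
\item For the remaining parabolas, Mason--Stothers shows that $a_1\gamma_1^p+a_2\gamma_2^q$ is at most a square. So $F$ splits into at most two irreducible factors $T^{r/2}\pm G$, each contributing $\max\{X,Y\}^{2/(2r)}$.
\end{itemize}
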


This result is completely uniform in the coefficients of $c$ and $f$.
Moreover, as discussed in Remark \ref{rem:rem}, a small improvement is possible when $r\geq 3$.
After dividing $(a_1,a_2,a_3)$ by their common divisor, we may and shall assume that $\gcd(a_1,a_2,a_3)=1$ in all that follows. 
Replacing $c$ by its primitive part, which does not change its
zero set, we may also assume that $c$ is primitive.

\subsection{Preliminary tools}\label{s:goat}

First, we shall use the Bombieri--Pila bound for integral points on plane curves~\cite[Theorem 5]{BombieriPila}.
\begin{lemma}\label{lem:BP}
Let $\ve>0$ and let $g\in\Z[U,V]$ be absolutely irreducible of degree $e$. Then 
\[
\#\{(u,v)\in\Z^2: |u|,|v|\leq B,\ g(u,v)=0\}\ll_{\eps,e}B^{\frac 1e+\eps}.
\] 
\end{lemma}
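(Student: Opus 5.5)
This is the Bombieri--Pila theorem~\cite[Theorem 5]{BombieriPila}, and we would simply quote it. If a self-contained argument were wanted, we would follow the real-analytic determinant method, as follows.

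\textbf{Reduction to graphs.} Let $\Gamma=\{g=0\}\cap[-B,B]^2\subset\R^2$. Since $g$ has degree $e$, the real curve $\Gamma$ splits into $O_e(1)$ arcs on each of which:
\begin{itemize}
\item the curve is a graph $v=\phi(u)$ or $u=\phi(v)$;
\item $\phi$ is convex or concave;
\item the slope has absolute value at most $1$.
\end{itemize}
To see this, remove the $O_e(1)$ singular points, the points with vertical or horizontal tangent, and the inflection points; each set is bounded in number by B\'ezout. This reduces us to counting integral points on one such graph of length $\ll B$.

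\textbf{Interpolation determinant.} Fix an auxiliary degree $k$, to be chosen large in terms of $\eps$ and $e$. Let $M$ be the set of monomials $u^iv^j$ of degree at most $k$, reduced modulo $g$. Thus $M$ is a basis of polynomials of degree at most $k$ restricted to the curve, and $\#M=ke-\binom{e-1}{2}+O(1)$ for $k\ge e$.

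Subdivide the arc into short subarcs of length $\delta$. Take any $\#M$ integral points $P_1,\dots,P_{\#M}$ on one subarc and form the matrix $\bigl(m(P_i)\bigr)$. Its determinant $\Delta$ is an integer. On the other hand, Taylor-expanding $\phi$ about one point and performing column operations, as in the Bombieri--Pila lemma, gives
\[
|\Delta|\le B^{\sum\deg m}\cdot\prod_{\ell}\bigl(C\,\delta^{\ell}\,\|\phi^{(\ell)}\|\bigr).
\]
Derivative control for $\phi$ comes from the algebraicity of $g$, together with convexity and the slope bound.

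Choosing $\delta=B^{-\theta}$ with $\theta$ tuned appropriately forces $|\Delta|<1$, hence $\Delta=0$. So all integral points on a subarc lie on an auxiliary curve of degree at most $k$ that does not contain the component $g=0$. By B\'ezout, each subarc then carries at most $ek$ integral points.

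\textbf{Counting and the main obstacle.} The number of subarcs is $\ll B\delta^{-1}$, up to an extra length-versus-curvature argument. Optimizing $k$ gives a total of $B^{1/e+O(1/k)}$, and taking $k$ large yields $B^{1/e+\eps}$.

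The main obstacle is the uniformity of the derivative estimates on the subarcs. The bounds on $\phi^{(\ell)}$ must depend only on $e$, not on the coefficients of $g$. Bombieri and Pila handle this by the convexity/arc-length trick: an arc with large curvature is short, so it can be covered by few subarcs, and its Taylor coefficients are controlled through a rescaling argument. We would reproduce that argument to obtain the bound with implied constant depending only on $\eps$ and $e$, as required.
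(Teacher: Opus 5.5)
Your primary route is the paper's: Lemma~\ref{lem:BP} is not proved there but simply quoted from \cite[Theorem 5]{BombieriPila}, and the only adjustments needed are routine (the case $e=1$ is trivial, and an integral translation moves $[-B,B]^2$ into $[0,2B]^2$). Your optional self-contained sketch is unnecessary and would not stand on its own: it defers the decisive coefficient-uniform control of $\phi^{(\ell)}$ back to Bombieri--Pila, and as written it mixes scales, since the factor $B^{\sum\deg m}$ and the choice $\delta=B^{-\theta}$ refer to the curve rescaled to unit size, where there are $\ll\delta^{-1}$ subarcs rather than $\ll B\delta^{-1}$.
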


The following result is due to Vaughan and Wooley~\cite[Lemma~3.5]{VW}.

\begin{lemma}\label{lem:VW} 
Let $\ve>0$ and let $a,b,c,P$ be nonzero integers, with $P\geq2$. Then
 \[ \#\left\{ (u,v)\in\ZZ^2: |u|,|v|\leq P,\quad au^2+bv^2=c \right\} \ll_\varepsilon (|abc|P)^\varepsilon. \] 
 \end{lemma}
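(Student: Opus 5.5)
The plan is to reduce the conic $au^2+bv^2=c$ to a norm equation in a quadratic field and then count by a divisor-type bound. Multiplying by $a$ and writing $X=au$, $N=ac$ and $D=-ab$, every solution $(u,v)$ gives a solution of
\[
X^2-Dv^2=N,\qquad |X|\leq |a|P,\ |v|\leq P,
\]
and the map $(u,v)\mapsto (X,v)$ is injective. Since $c\neq 0$ we have $N\neq0$, and $|N|\leq |abc|$. It therefore suffices to bound the number of such pairs $(X,v)$ by $O_\eps((|abc|P)^\eps)$.

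\emph{Case 1: $D=s^2$ is a perfect square.} Then $(X-sv)(X+sv)=N$. Each solution determines an ordered factorisation $N=d_1d_2$ with $d_1=X-sv$ and $d_2=X+sv$. Conversely, $d_1,d_2$ determine $X=(d_1+d_2)/2$, and they determine $v$ because $s\neq0$. Hence the number of solutions is at most $2\tau(|N|)\ll_\eps |abc|^\eps$ by the divisor bound.

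\emph{Case 2: $D$ is not a square.} Let $K=\QQ(\sqrt D)$ with ring of integers $\mathcal O_K$. Each solution gives an element $\alpha=X+v\sqrt D\in\mathcal O_K$ with $N_{K/\QQ}(\alpha)=N$, and distinct solutions give distinct $\alpha$. The principal ideal $(\alpha)$ has norm $|N|$. The number of ideals of $\mathcal O_K$ of norm $|N|$ is at most $\sum_{m\mid N}\chi(m)\leq\tau(|N|)^2\ll_\eps |N|^\eps$, using that each rational prime splits into at most two primes. Two solutions generating the same ideal differ by a unit $\eta$.
\begin{itemize}
\item If $D<0$, there are at most $6$ units, so the count is $\ll_\eps |abc|^\eps$.
\item If $D>0$, fix one generator $\alpha_0$ and write $\alpha=\pm\alpha_0\varepsilon_0^k$ with $\varepsilon_0>1$ the fundamental unit. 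Both conjugates of every solution satisfy $|X\pm v\sqrt D|\leq 2|a|P\sqrt{|ab|}$, and both have absolute value at least $|N|^{-1}\cdot$ (the other conjugate)$^{-1}$ times $|N|$, i.e. at least $(2|a|P\sqrt{|ab|})^{-1}$. Taking logarithms of the embeddings, $|k|\log\varepsilon_0\ll\log(|abc|P)$. Since $\varepsilon_0\geq(1+\sqrt5)/2$, the number of admissible $k$ is $O(\log(|abc|P))$.
\end{itemize}
Combining these gives the count $\ll_\eps |abc|^\eps\log(|abc|P)\ll_\eps(|abc|P)^{2\eps}$, and the result follows after rescaling $\eps$.

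The main obstacle is the real quadratic case. There one must check that the unit group only contributes a logarithmic factor uniformly in $D$, and that passing from $\ZZ[\sqrt D]$ to the maximal order loses nothing. Working in $\mathcal O_K$ directly, and bounding units via the lower bound $\varepsilon_0\geq(1+\sqrt5)/2$ together with the size constraints on both real embeddings, handles both points.
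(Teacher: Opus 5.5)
Your proof is correct. The paper does not prove this lemma at all: it quotes it directly from Vaughan--Wooley \cite[Lemma~3.5]{VW}. So the comparison is between a citation and your self-contained argument, which is the standard route:
\begin{itemize}
\item reduce to the norm equation $X^2-Dv^2=N$ with $D=-ab$ and $N=ac$;
\item use the divisor bound when $D$ is a square;
\item otherwise count at most $\tau(|N|)$ ideals of norm $|N|$ in $\QQ(\sqrt D)$, each accounting for at most $6$ solutions if $D<0$ and $O(\log(|abc|P))$ solutions if $D>0$.
\end{itemize}

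Your version buys transparency about where the factor $P^{\varepsilon}$ comes from. It is needed only for the unit group in the real quadratic case, and it cannot be removed: for example, $u^2-2v^2=1$ has $\asymp\log P$ solutions with $|u|,|v|\leq P$.

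Two cosmetic points.
\begin{itemize}
\item The number of ideals of norm $n$ is $\sum_{m\mid n}\chi(m)\leq\tau(n)$, so the square on $\tau$ is unnecessary.
\item The lower bound on a conjugate is simply $|\alpha|=|N|/|\alpha'|\geq|\alpha'|^{-1}\geq R^{-1}$ with $R=2|a|P\sqrt{|ab|}$. If you take $\alpha_0$ to be one of the solutions, you get $|k|\log\varepsilon_0\leq 2\log R$ immediately. If $\alpha_0$ is an arbitrary generator, the admissible $k$ still lie in an interval of length $2\log R/\log\varepsilon_0$, so the count is unaffected.
\end{itemize}
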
 

For any polynomial $g$ with integer coefficients, the height of $g$ is 
denoted $H(g)$ and is 
defined to be the maximum modulus of its coefficients.
We shall use the following result of Heath-Brown~\cite[Theorem 5]{HeathBrown}.

\begin{lemma}\label{lem:large}
If $g\in\Z[U,V]$ is primitive, absolutely irreducible, and of degree $e$, then either the curve $g=0$ has $O_e(1)$ integer points of height at most $B$, or $H(g)\leq B^{O_e(1)}$. 
\end{lemma}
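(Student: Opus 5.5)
The approach is a linear-algebra (Siegel/Cramer) argument, combined with B\'ezout's theorem to pin down $g$ from enough of its points. Let $\mathcal M$ be the set of monomials $U^iV^j$ with $i+j\leq e$, and put $N=\#\mathcal M=\binom{e+2}{2}$. Suppose the curve $g=0$ has more than $e^2$ integer points $(u,v)$ with $|u|,|v|\leq B$; otherwise the first alternative holds with the bound $e^2=O_e(1)$. Fix $e^2+1$ distinct such points $P_1,\dots,P_{e^2+1}$. Form the $(e^2+1)\times N$ matrix $A$ with rows $(m(P_k))_{m\in\mathcal M}$. A polynomial $h$ of degree at most $e$ vanishes at all $P_k$ exactly when its coefficient vector lies in $\ker A$. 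In particular the coefficient vector of $g$ lies in $\ker A$.

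The key step is to show that $\ker A$ is one-dimensional, spanned by the coefficient vector of $g$. Let $h$ have degree at most $e$ and vanish at every $P_k$. Since $g$ is absolutely irreducible, either $g\mid h$ or $g$ and $h$ have no common component. In the latter case, B\'ezout's theorem gives at most $e\deg h\leq e^2$ common zeros in $\mathbb{P}^2(\overline{\mathbb Q})$. This contradicts the $e^2+1$ common zeros $P_k$, so $g\mid h$. Because $\deg h\leq e=\deg g$, we get $h=\lambda g$ for some constant $\lambda$. Hence $\rank A=N-1$.

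For the height bound, choose $N-1$ linearly independent rows of $A$, giving an $(N-1)\times N$ integer matrix $A'$ of rank $N-1$ with the same kernel. By Cramer's rule, the vector $w$ whose entries are the signed maximal minors of $A'$ (delete one column at a time) is a nonzero integer vector spanning $\ker A'$. Each entry of $A'$ has absolute value at most $B^e$, so Hadamard's inequality gives $|w_i|\leq (N-1)^{(N-1)/2}B^{e(N-1)}$. The coefficient vector of $g$ is a rational multiple of $w$. Since $g$ is primitive, it equals $\pm w/\gcd(w)$, and therefore
\[
H(g)\leq (N-1)^{(N-1)/2}B^{e(N-1)}\ll_e B^{O_e(1)},
\]
which is the second alternative.

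The main point needing care is the kernel dimension step. It needs $e^2+1$ points, not just $N-1$, and it uses absolute irreducibility of $g$ to rule out another degree-$e$ curve through all the points. One also needs $e\geq1$, which holds since $g$ is absolutely irreducible and hence nonconstant. Everything else is routine bookkeeping of the size of the minors.
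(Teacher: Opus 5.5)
Your proof is correct. The paper gives no proof of its own here: it simply quotes Heath-Brown's Theorem~5, whose proof is this same dichotomy. Either the monomial matrix of the points has rank $N-1$, in which case the primitive coefficient vector of $g$ is, up to sign, a scaled vector of maximal minors of size $B^{O_e(1)}$, or some other polynomial of degree at most $e$, not divisible by $g$, passes through all the points. You have folded in the B\'ezout step that turns the second alternative into the bound of at most $e^2$ points. Your remaining bookkeeping (Cramer's rule, Hadamard's inequality, and primitivity to pass from $w$ to the coefficients of $g$) is sound.
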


Third, we shall use Capelli's criterion for binomials~\cite[Theorem VI.9.1]{lang}. 
\begin{lemma}\label{lem:capelli}
If $K$ is a field of characteristic zero and $\alpha\in K^*$, then $T^r-\alpha$ is reducible over $K[T]$ only if $\alpha\in K^\ell$ for some prime $\ell\mid r$, or if $4\mid r$ and $\alpha\in -4K^4$.
\end{lemma}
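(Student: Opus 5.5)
Lemma~\ref{lem:capelli} is classical and is only quoted from \cite{lang}; here is how I would prove it directly. I will show the contrapositive: if $\alpha\notin K^\ell$ for every prime $\ell\mid r$, and in addition $\alpha\notin -4K^4$ when $4\mid r$, then $T^r-\alpha$ is irreducible. Fix a root $\beta$ of $T^r-\alpha$ in an algebraic closure. Irreducibility is then equivalent to $[K(\beta):K]=r$. I would prove this in three steps: the prime case, the prime-power case, and the general case.

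\emph{Prime case.} Let $r=\ell$ be prime and suppose $T^\ell-\alpha$ has a monic factor $g\in K[T]$ of degree $k$ with $0<k<\ell$. The roots of $T^\ell-\alpha$ are $\zeta\beta$ with $\zeta^\ell=1$. So the constant term of $g$ equals $\pm\zeta_0\beta^k$ for some $\ell$-th root of unity $\zeta_0$. Hence $c=\zeta_0\beta^k\in K$, and $c^\ell=\alpha^k$. Since $\gcd(k,\ell)=1$, choose $a,b$ with $ak+b\ell=1$. Then $\alpha=(c^a\alpha^b)^\ell\in K^\ell$, a contradiction.

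\emph{Prime-power case $r=\ell^e$.} I would induct on $e$ using the tower $K\subset K(\gamma)\subset K(\beta)$ with $\gamma=\beta^{\ell^{e-1}}$. By the prime case, $[K(\gamma):K]=\ell$. It then suffices to show that $T^{\ell^{e-1}}-\gamma$ satisfies the hypotheses over $K(\gamma)$, i.e. that $\gamma\notin K(\gamma)^\ell$ (plus the $-4$ condition when $\ell=2$). Suppose $\gamma=\delta^\ell$ with $\delta\in K(\gamma)$. Taking norms gives
\[
N_{K(\gamma)/K}(\gamma)=(-1)^{\ell-1}\alpha=N(\delta)^\ell .
\]
For $\ell$ odd the sign is $+1$, so $\alpha$ is an $\ell$-th power in $K$, a contradiction. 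The inductive hypothesis over $K(\gamma)$ then gives degree $\ell^{e-1}$, hence total degree $\ell^e$.

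\emph{The case $\ell=2$ (main obstacle).} Here the norm argument only gives $-\alpha=N(\delta)^2$, that is, $\alpha=-s^2$ with $s\in K$. I would then analyse directly when $\sqrt{\alpha}=\gamma$ is a square in $K(\gamma)=K(\sqrt{-1}\,s)$. Write $\gamma=(x+y\gamma)^2$ and compare coefficients: this gives $x^2+\alpha y^2=0$ and $2xy=1$. From these one deduces that $\alpha=-4t^4$ for some $t\in K$. This situation only matters when $4\mid r$, since for $e=1$ only the prime case is used. One must also check that at deeper levels of the induction the $-4K^4$ condition propagates: if $\gamma\in-4K(\gamma)^4$, then taking norms again forces $\alpha\in -4K^4$ or $\alpha\in K^2$. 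This bookkeeping is the step I expect to need the most care.

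\emph{General $r$.} Write $r=\prod_i \ell_i^{e_i}$. For each $i$, the element $\beta_i=\beta^{r/\ell_i^{e_i}}$ is a root of $T^{\ell_i^{e_i}}-\alpha$. The hypotheses on $\alpha$ are inherited by each prime power: for odd $\ell_i$ trivially, and for $\ell_i=2$ the $-4K^4$ exclusion is only needed when $4\mid \ell_i^{e_i}$. So by the prime-power case $[K(\beta_i):K]=\ell_i^{e_i}$. Each of these degrees divides $[K(\beta):K]$, and they are pairwise coprime, so $r\mid[K(\beta):K]\le r$. Hence $T^r-\alpha$ is irreducible, which proves the lemma.
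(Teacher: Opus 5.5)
The paper gives no proof of this lemma; it quotes Capelli's theorem from Lang. Your outline is essentially the standard textbook proof: the prime case via the constant term of a putative factor, prime powers by induction along $K\subset K(\gamma)\subset K(\beta)$ using norms, a direct computation in $K(\gamma)$ when $\ell=2$, and the coprime-degree argument for general $r$. These steps are correct as you state them. This includes the comparison $\gamma=(x+y\gamma)^2\Rightarrow x^2+\alpha y^2=0,\ 2xy=1\Rightarrow\alpha=-4x^4$, which does not even need the preliminary norm step.

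The step that does not work as you describe it is the propagation of the $-4K^4$ condition. It is needed when $8\mid r$, so that the induction hypothesis with exponent $2^{e-1}$, $e-1\geq 2$, can be applied over $K(\gamma)$. If $\gamma=-4\delta^4$ with $\delta\in K(\gamma)$, taking norms gives only $-\alpha=N(\gamma)=16N(\delta)^4$, that is, $\alpha\in-16K^4$. Such an $\alpha$ lies in $K^2$ only if $-1\in K^2$, and lies in $-4K^4$ only if $2$ or $-2$ is a square in $K$. For $K=\QQ$ and $\alpha=-16$, neither holds. So norms alone do not give the disjunction you claim.

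The repair is short. Since $-4\delta^4=-(2\delta^2)^2$, the element $-\gamma$ is a square in $K(\gamma)$. Writing $-\gamma=(x+y\gamma)^2$ with $x,y\in K$ gives $x^2+\alpha y^2=0$ and $2xy=-1$, hence again $\alpha=-4x^4$. This contradicts the hypothesis, because $4\mid r$.

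Alternatively, use the norm only to conclude that $-\alpha=s^2$ with $s\in K^*$. Then $i=\gamma/s$ lies in $K(\gamma)$ and satisfies $i^2=-1$. Since $-4=(1+i)^4$ is a fourth power there, $\gamma\in K(\gamma)^4\subseteq K(\gamma)^2$, a case you have already excluded.

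With either fix the induction closes and your proof is complete.
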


We will always apply Lemma~\ref{lem:capelli} for $K$ such that $\K\subseteq K$, so that $-4$ is 
a square in $K$. Thus we can conclude that $\alpha\in K^2$ in the exceptional case. 
 Thus, if $T^r-\alpha$ is reducible over $K[T]$, this will always imply that $\alpha\in K^\ell$ for some prime $\ell\mid r$. Fourth, we need the Brownawell--Masser theorem on vanishing sums of $S$-units in function fields~\cite[Corollary~1]{BrownawellMasser}, which is the function field analogue of the $abc$ conjecture.

\begin{lemma}\label{lem:Sunit}
 Let $C$ be a smooth projective curve over $\K$ of genus $g$, let $K=\K(C)$, and let $u,v\in K^*$ be nonconstant $S$-units satisfying $1+u+v=0$. If no proper subsum vanishes, then $H_C(1:u:v)\leq 2g-2+|S|$. Here $H_C(1:u:v)$ is the degree of the corresponding map from $C$ to $\mathbb{P}^2$.
\end{lemma}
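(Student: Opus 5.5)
Since this is the function-field $abc$ inequality of Brownawell and Masser, quoted from~\cite[Corollary~1]{BrownawellMasser}, we could simply cite it. For completeness, here is how I would prove it directly, via the Wronskian / logarithmic-derivative argument behind the Mason--Stothers theorem.

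\textbf{Step 1: express the point through differentials.} Since $u,v$ are nonconstant and the characteristic is zero, the logarithmic differentials $\omega_1=du/u$ and $\omega_2=dv/v$ are nonzero elements of $\Omega_{K/\K}$. Differentiating $1+u+v=0$ gives $u\,\omega_1+v\,\omega_2=0$. Together with $u+v=-1$, this linear system has the solution
\[
u=\frac{-\omega_2}{\omega_2-\omega_1},\qquad v=\frac{\omega_1}{\omega_2-\omega_1}.
\]
This requires $\omega_1\neq\omega_2$. Indeed, if $\omega_1=\omega_2$ then $u/v$ is constant. Together with $u+v=-1$ this forces $u$ to be constant, which is excluded.

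\textbf{Step 2: place the coordinates in one line bundle.} Step 1 gives the projective identity
\[
(1:u:v)=(\omega_2-\omega_1:-\omega_2:\omega_1),
\]
in which all three coordinates are nonzero meromorphic differentials. Since $u$ is an $S$-unit, $\omega_1$ is regular outside $S$ and has at most simple poles at points of $S$; the same holds for $\omega_2$. Hence all three coordinates are global sections of $\Omega^1_C(S)$, a line bundle of degree $2g-2+|S|$.

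\textbf{Step 3: bound the degree.} The degree of the morphism $C\to\PP^2$ defined by three sections of a line bundle $L$, not all zero, equals $\deg L$ minus the degree of their common base locus. It is therefore at most $\deg L=2g-2+|S|$. This gives $H_C(1:u:v)\le 2g-2+|S|$.

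The only delicate points are the nonvanishing and well-definedness checks: that $\omega_1$, $\omega_2$ and $\omega_2-\omega_1$ are all nonzero. This is where nonconstancy and the absence of vanishing proper subsums enter. Everything else is a standard degree count.
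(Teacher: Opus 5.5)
Your proof is correct, but it is not the route the paper takes. The paper gives no argument for this lemma: it simply quotes Brownawell--Masser [Corollary~1], which treats vanishing sums $u_1+\dots+u_n=0$ of arbitrary length via generalised Wronskians, and whose bound specialises to $2g-2+|S|$ when $n=3$.

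You prove only the three-term case, using the logarithmic-differential argument of Mason and Silverman. The identity $(1:u:v)=(\omega_2-\omega_1:-\omega_2:\omega_1)$ realises the map $C\to\PP^2$ by three global sections of $\Omega^1_C(S)$. The bound then follows because base points can only lower the degree below $\deg\Omega^1_C(S)=2g-2+|S|$.

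Your route is self-contained and elementary, and it is exactly what the paper needs, since the lemma is only ever applied to $1+u+v=0$ (in Lemma~\ref{cor:Firr}). The cited theorem buys the general $n$-term case, which your one-step differential trick does not reach without higher-order Wronskians.

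Two small corrections to your write-up:
\begin{enumerate}
\item The hypothesis that no proper subsum vanishes is automatic here, since $u,v\neq0$ and $1\neq0$. Contrary to your last paragraph, it plays no role; only nonconstancy of $u$ is used.
\item When you rule out $\omega_1=\omega_2$, you should split into cases. Writing $u=cv$ with $c\in\K$, either $c\neq-1$ and $v=-1/(1+c)$ is constant, or $c=-1$ and $1+u+v=0$ gives $1=0$.
\end{enumerate}
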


We restate the Brownawell–Masser theorem for polynomials, which is the Mason--Stothers theorem~\cite[Theorem 1]{polabc}.

\begin{lemma}\label{lem:abc}
Let $A,B,C\in \overline{\Q}[t]$ be nonzero, pairwise coprime polynomials satisfying \[
A+B+C=0,
\]
with at least one of $A,B,C$ not constant. Then
\[\max\{\deg A,\deg B,\deg C\}\leq \deg\rad(ABC)-1,\]
where $\rad(ABC)$ denotes the product of the distinct linear factors dividing $ABC$.
\end{lemma}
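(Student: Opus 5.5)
This is the genus-zero case of Lemma~\ref{lem:Sunit}, so the plan is to specialise that lemma to the projective line rather than redo the Wronskian argument of Mason and Stothers. Take $C=\PP^1$ over $\K$, so $g=0$ and $K=\K(t)$. Put
\[
u=\frac{B}{A},\qquad v=\frac{C}{A},
\]
so that $1+u+v=0$ follows from $A+B+C=0$.

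Let $S$ be the set consisting of the distinct roots of $ABC$ together with the point at infinity. The zeros and poles of $u$ and $v$ on $\PP^1$ lie among the roots of $A$, $B$, $C$ and possibly $\infty$. Hence $u$ and $v$ are $S$-units, and
\[
|S|\leq \deg\rad(ABC)+1.
\]

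Next I check the hypotheses of Lemma~\ref{lem:Sunit}.
\begin{enumerate}[label=(\roman*)]
\item \emph{$u$ is nonconstant.} Suppose $B=\lambda A$ with $\lambda\in\K^*$. Since $\gcd(A,B)=1$, $A$ must be constant, so $B$ is constant, and then $C=-A-B$ is constant too. This contradicts the assumption that one of $A,B,C$ is nonconstant. The same argument with $C$ in place of $B$ shows $v$ is nonconstant.
\item \emph{No proper subsum vanishes.} $1+u=0$ would force $v=0$, i.e.\ $C=0$, which is excluded. Likewise $1+v=0$ would force $B=0$. Finally $u+v=0$ would force $1=0$.
\end{enumerate}

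The remaining step, which carries the only real content, is to compute $H_C(1:u:v)$. The map $\PP^1\to\PP^2$ given by $(1:u:v)$ equals $t\mapsto(A(t):B(t):C(t))$. Because $A,B,C$ are pairwise coprime, this triple has no common zero in $\K$, so the representation is already reduced. Its degree is therefore the degree of the pullback of a general line $\alpha X+\beta Y+\gamma Z$. That pullback is the number of roots of $\alpha A+\beta B+\gamma C$, counted with the appropriate multiplicity at $\infty$, which equals $\max\{\deg A,\deg B,\deg C\}$.

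Lemma~\ref{lem:Sunit} then gives
\[
\max\{\deg A,\deg B,\deg C\}\leq 2\cdot 0-2+|S|\leq \deg\rad(ABC)-1,
\]
which is the claim.

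The only point I expect to need care is the height computation, in particular that coprimality rules out cancellation and that the degree at $\infty$ is accounted for correctly. Including $\infty$ in $S$ unconditionally only weakens the bound by one, and that slack is exactly what the $+1$ in $|S|$ and the $-2$ from the genus absorb.
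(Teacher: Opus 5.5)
Your argument is correct and is essentially the paper's approach: the paper gives no separate proof, citing Mason--Stothers and presenting it explicitly as the polynomial restatement of the Brownawell--Masser bound in Lemma~\ref{lem:Sunit}. Your specialization to the genus-zero curve $\PP^1$, with $u=B/A$, $v=C/A$, $S$ the roots of $ABC$ together with $\infty$, and $H(1:u:v)=\max\{\deg A,\deg B,\deg C\}$ (no cancellation since $A,B,C$ have no common zero), is exactly what makes that restatement precise.
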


We use the following elementary fact several times. If a plane curve defined over $\Q$ is irreducible over $\Q$ but not over $\K$, then it contains $O_D(1)$ rational points when its degree is at most $D$. Indeed, every rational point lies in the intersection of two distinct Galois-conjugate components, and we can apply Bézout's theorem.

Finally, we notice that $f(x,y,z)=a_1x^p+a_2y^q+a_3z^r\in \Z[x,y,z]$ is absolutely irreducible. Indeed, each irreducible factor of $a_1x^p+a_2y^q\in \overline{\Q}[x,y]$ occurs with multiplicity one, and so Lemma~\ref{lem:capelli} implies that $f(x,y,z)$ is absolutely irreducible in 
$\overline{\Q}[x,y][z]$. Thus Gauss' lemma gives 
irreducibility in $\overline{\Q}[x,y,z]$.

\subsection{Easy cases}
Let $c(x,y)\in \Z[x,y]$ be absolutely irreducible of degree $d$. Let $C$ be the smooth projective normalization of the projective closure of $c(x,y)=0$. Let $\operatorname{div}(x)$ and $\operatorname{div}(y)$ be the divisors of $x$ and $y$ as functions of $C$. Recall that, given a function $h:C\to \mathbb{P}^1$, the divisor of $h$ is defined to be 
\[
\operatorname{div}(h)=\sum_{P\in C}\ord_P(h)P.
\]
Moreover, 
$\supp(\operatorname{div}(h))=\{P\in C: \ord_P(h)\neq 0\}$ is the set of poles and zeros of $h$.
We say that $\supp(\operatorname{div}(x))$ and $\supp(\operatorname{div}(y))$ are {\em comparable} if one is contained in the other.
In this section, we prove Theorem~\ref{thm:main} in the special cases where $c(x,y)$ is vertical or horizontal, or where $\supp(\operatorname{div}(x))$ and $\supp(\operatorname{div}(y))$ are comparable, or where $f$ or $c$ have large coefficients.

\begin{lemma}\label{lem:height-reduction}
Theorem~\ref{thm:main} holds if $c(x,y)$ is not absolutely irreducible, or if $H(c)$ or $H(f)$ exceeds $B^{O_D(1)}$, for a suitable implied constant.
\end{lemma}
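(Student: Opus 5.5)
Three separate situations have to be handled: $c$ not absolutely irreducible, $H(c)$ large, and $H(f)$ large. In each we aim to show $\#S(X,Y,Z,f,c)=O_D(1)$, which is stronger than both \eqref{eq:main-bound2} and \eqref{eq:main-bound}.

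\emph{$c$ not absolutely irreducible.} By hypothesis $c$ is irreducible over $\Q$ of degree $d\le D$. If it is not absolutely irreducible, the elementary fact recalled in \S\ref{s:goat} applies: any rational point $(x,y)$ of $c=0$ is fixed by Galois, so it lies on two distinct conjugate components, and Bézout gives $O_D(1)$ such points. Each $(x,y)$ determines $z$ up to at most $r\le D$ choices through $a_3z^r=-a_1x^p-a_2y^q$; this requires $a_3\neq 0$, which holds. So $\#S=O_D(1)$.

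\emph{$c$ absolutely irreducible with $H(c)>B^{O_D(1)}$.} We may take $c$ primitive. Lemma~\ref{lem:large} is applied to $g=c$ with height parameter $B$; this is legitimate because points of $S$ have $|x|,|y|\le B$. If $H(c)$ exceeds the bound $B^{O_d(1)}$ from that lemma, then $c=0$ has $O_D(1)$ integer points of height at most $B$. The same $z$-count as before then gives $O_D(1)$ points in $S$.

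\emph{$H(f)=\max|a_i|>B^{O_D(1)}$.} This is the main obstacle, since $f$ lives in three variables and Lemma~\ref{lem:large} is stated for plane curves. One reduction is to fix $z$, but that loses uniformity in $z$. A better route is to stay on the curve: since $c$ is absolutely irreducible, the curve $f=c=0$ can be viewed over the plane curve $c=0$. For solutions $(x,y,z)$ with $|x|,|y|,|z|\le B$, the vector $(a_1,a_2,a_3)$, normalized with $\gcd(a_1,a_2,a_3)=1$, is orthogonal to $(x^p,y^q,z^r)$.
\begin{itemize}
\item If there are at least two solutions with linearly independent vectors $(x^p,y^q,z^r)$, then $(a_1,a_2,a_3)$ is proportional to their cross product. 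Primitivity then gives $\max|a_i|\ll B^{2D}$.
\item If all solutions have proportional vectors $(x^p,y^q,z^r)$, the number of solutions is small. Proportionality together with $\gcd(x,y,z)=1$ pins down $(x,y,z)$ up to $O_D(1)$ choices: from $x^p/y^q$ and $y^q/z^r$ fixed, the primitive integer solution is determined up to roots of unity.
\end{itemize}
Hence if $H(f)>B^{2D+1}$, say, then $\#S=O_D(1)$. This argument in fact works without using $c$ at all, and if correct it is the cleanest step. The remaining effort is in making the ``determined up to $O_D(1)$'' claim precise: fixing the ratios $x^p:y^q:z^r$ fixes $|x|^p,|y|^q,|z|^r$ up to a common scalar $\lambda$, and we need that primitivity allows only boundedly many $\lambda$. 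This may require a short valuation argument comparing $v_\ell$ across the three coordinates.

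Combining the three cases gives $\#S(X,Y,Z,f,c)=O_D(1)$ whenever $c$ fails absolute irreducibility or one of the heights exceeds $B^{O_D(1)}$, which is dominated by the right-hand sides of \eqref{eq:main-bound2} and \eqref{eq:main-bound}.
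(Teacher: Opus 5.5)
Your proposal is correct and follows the paper's proof essentially verbatim. You use the Galois-conjugate/B\'ezout observation when $c$ is not absolutely irreducible, and Lemma~\ref{lem:large} together with the $r$ choices of $z$ when $H(c)$ is large. When $H(f)$ is large you use the same dichotomy on the span of the vectors $(x^p,y^q,z^r)$, with $(a_1,a_2,a_3)$ proportional to the cross product in the rank-two case.

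The ``short valuation argument'' you anticipate in the rank-one case is immediate. The condition $\gcd(x,y,z)=1$ forces $\gcd(x^p,y^q,z^r)=1$, so every such vector is $\pm$ the primitive integer generator of the line. Each coordinate then has at most two integer roots, which is exactly how the paper concludes.
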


\begin{proof}
Recall that $c(x,y)$ is irreducible. If it is not absolutely irreducible, then the upper bound in Theorem~\ref{thm:main} follows from the observation at the end of Section~\ref{s:goat}. Thus we may assume that $c(x,y)$ is absolutely irreducible.

By Lemma~\ref{lem:large}, if $H(c)$ is larger than $B^{O_D(1)}$, then $c(x,y)=0$ has $O_D(1)$ integer points of bounded height. For each $(x_0,y_0)$, there are at most $r$ choices of $z$ satisfying $f(x_0,y_0,z)=0$.

It remains to show that we can assume that $H(f)$ is bounded by a power of $B$. 
Consider the $\Q$-vector space $V$ spanned by \[
\{(x^p,y^q,z^r):(x,y,z)\in S(X,Y,Z,f)\}.
\] 
If $\rank(V)\leq 1$, then 
all vectors $(x^p,y^q,z^r)$ differ only by sign from a fixed integer vector on that line, 
since $\gcd(x,y,z)=1$. Each coordinate has $O_D(1)$ possible roots, which gives a total of $O_D(1)$ elements in $S(X,Y,Z,f)$. The rank of $V$ cannot be $3$, since each vector in $V$ is orthogonal to $(a_1,a_2,a_3)$. If $\rank(V)=2$, let $(x_1,y_1,z_1),(x_2,y_2,z_2)\in S(X,Y,Z,f)$ be such that $\mathbf{v}_1=(x_1^p,y_1^q,z_1^r)$ and $\mathbf{v}_2=(x_2^p,y_2^q,z_2^r)$ are linearly independent. But then $\mathbf{v}_1\times \mathbf{v}_2=m(a_1,a_2,a_3)$, for some $m\in \ZZ$, since $\gcd(a_1,a_2,a_3)=1$. Thus 
\[
H(f)\leq |\mathbf{v}_1\times \mathbf{v}_2|\leq B^{O_D(1)},
\] 
as required. 
\end{proof}

\begin{lemma}\label{lem:vertical-horizontal}
Theorem~\ref{thm:main} holds if $c(x,y)$ is vertical or horizontal.
\end{lemma}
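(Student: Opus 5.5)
If $c$ is vertical, then $c(x,y)=x-x_0$ after using irreducibility and primitivity of $c$. (A vertical irreducible $c\in\ZZ[x]$ with an integer root must be linear; otherwise it has no rational zeros and $S(X,Y,Z,f,c)$ is empty.) So the task is to count primitive $(y,z)$ with $|y|\le Y$, $|z|\le Z$ satisfying
\[
a_2y^q+a_3z^r=-a_1x_0^p.
\]
The horizontal case $c=y-y_0$ is handled the same way with the roles of $(x,p)$ and $(y,q)$ exchanged, so I treat only the vertical case. Since $d=1$, the target is $B^{\ve}\max\{X,Y\}^{1/r}$.

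\textbf{Case $x_0=0$.} The equation becomes $a_2y^q=-a_3z^r$, and primitivity forces $\gcd(y,z)=1$.
\begin{itemize}
\item For each prime $\ell\mid y$, compare $\ell$-adic valuations. Since $\ell\nmid z$, we get $qv_\ell(y)=v_\ell(a_3)-v_\ell(a_2)$, so $v_\ell(y)$ is determined by $a_2,a_3$. Symmetrically, for primes $\ell\mid z$, $v_\ell(z)$ is determined.
\item Hence $|y|$ and $|z|$ are determined up to the choice of which primes divide them. It is simpler to argue directly: $|y|^q/|z|^r=|a_3/a_2|$ with $y,z$ coprime, so $|y|^q$ equals the numerator of the reduced fraction, up to sign. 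This gives $O(1)$ solutions.
\end{itemize}

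\textbf{Case $x_0\neq0$, $N:=-a_1x_0^p$.} The count is at most the number of integer points of height $\le B$ on the plane curve $a_2y^q+a_3z^r=N$.
\begin{itemize}
\item \emph{Both $q,r\ge3$, or one of them $\ge3$ and the other $\geq 2$ with $(q,r)\ne(2,2)$.} Fix $z$ with $|z|\le Z$; then $y$ takes at most $q$ values. This gives only $O(Z)$, which is too weak. Instead fix $y$: then $z$ is determined up to $r$ choices, giving $O(Y)$, also too weak.
\item So I use Lemma~\ref{lem:BP}. First check that $a_2y^q+a_3z^r-N$ is absolutely irreducible. Since $N\ne0$, the polynomial $a_2y^q-N$ in $y$ is squarefree, so by Lemma~\ref{lem:capelli} it is not a perfect $\ell$-th power in $\K(y)$. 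Hence $z^r+(a_2y^q-N)/a_3$ is irreducible, as in the argument at the end of Section~\ref{s:goat}.
\item Its degree is $\max\{q,r\}\ge r$. Lemma~\ref{lem:BP} then gives $\ll B^{1/\max\{q,r\}+\ve}$.
\end{itemize}

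The remaining obstacle is that the target has $\max\{X,Y\}^{1/r}$, not $B^{1/r}$, and $Z$ might be much larger than $X$ and $Y$. To fix this I exploit the constraint $|y|\le Y$ together with the equation, which forces
\[
|a_3||z|^r\le |N|+|a_2|Y^q .
\]
This bound alone is not useful without controlling the coefficients. Instead I parametrize by $y$: each of the $\le 2Y+1$ values of $y$ gives at most $r$ values of $z$. This is only good when $Y$ is small.

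The cleaner route is to apply Bombieri--Pila in the lopsided form, in the variable whose exponent is $q$. For fixed $x_0$ and each $z$, there are at most $q$ values of $y$. It is then better to count $z$ via the map $z\mapsto z^r$, whose image lies in the set
\[
\{(N-a_2y^q)/a_3:|y|\le Y\}.
\]
I expect this step to be the main difficulty. My first attempt will be Lemma~\ref{lem:BP} applied with $u=y$ and $v=z$, after a rescaling that reduces to a box of side $\ll\max\{X,Y\}$ when $q\ge r$. This uses that $|z|^r\ll |a_2/a_3|Y^q$ once we use Lemma~\ref{lem:height-reduction} to bound $H(f)\le B^{O(1)}$, which only costs $B^{\ve}$-type factors after splitting into dyadic ranges.

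For $q=r=2$, Lemma~\ref{lem:BP} gives only $B^{1/2}$. Here I apply Lemma~\ref{lem:VW} directly, which yields $(|a_2a_3N|B)^{\ve}\ll B^{\ve}$ since all heights are polynomially bounded in $B$.
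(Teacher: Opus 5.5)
Your reduction to $c=\pm(x-x_0)$, your treatment of the case $x_0=0$, and your proof that $a_2y^q+a_3z^r+a_1x_0^p$ is absolutely irreducible for $x_0\neq0$ are all fine and agree with the paper. The gap is the counting step for $x_0\neq0$, in exactly the regime you flag yourself: $Z$ much larger than $\max\{X,Y\}$. None of the tools you actually invoke reaches $B^{\varepsilon}\max\{X,Y\}^{1/r}$ there. Lemma~\ref{lem:BP} on the square box gives $Z^{1/\max\{q,r\}+\varepsilon}$, and fibring over $y$ or over $z$ gives $O(Y)$ or $O(Z)$. For instance, take $q=r\ge3$, $X\le Y$ and $Y=Z^{1/(2r)}$: the best of these bounds is of order $Y$, far above $Y^{1/r}$.

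Your proposed repair does not close this. The inequality $|z|^r\ll|a_2/a_3|Y^q$ drops the term $N=-a_1x_0^p$, which can be of size $|a_1|X^p$. Even for bounded coefficients, the equation only gives $|z|\ll\max\{X^{p/r},Y^{q/r}\}$, which is not $\ll\max\{X,Y\}$, and there is no integral rescaling that shrinks the box. Moreover, Lemma~\ref{lem:height-reduction} only gives $H(f)\le B^{O_D(1)}$, and in this regime $B=Z$. So $|a_2/a_3|^{1/r}$ can be a genuine power of $Z$. That is not a $B^{\varepsilon}$ loss, and dyadic splitting cannot absorb it.

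The missing ingredient is the lopsided Bombieri--Pila bound of Heath-Brown \cite[Theorem~15]{HeathBrown}, which is what the paper uses here. For an absolutely irreducible $G\in\ZZ[y,z]$ of degree at most $D$, it bounds the integer zeros with $|y|\le Y$, $|z|\le Z$ by $\ll_{\varepsilon,D}B^{\varepsilon}\exp(\log Y\log Z/\log T)$. Here $T$ is the maximum of $Y^{e_1}Z^{e_2}$ over the monomials $y^{e_1}z^{e_2}$ occurring in $G$. Since $y^q$ and $z^r$ occur with the nonzero coefficients $a_2,a_3$, one has $T\ge\max\{Y^q,Z^r\}$. Hence the count is $\ll B^{\varepsilon}\min\{Y^{1/r},Z^{1/q}\}\le B^{\varepsilon}\max\{X,Y\}^{1/r}$, uniformly in the coefficients and regardless of how large $Z$ is. With this input, the separate treatment of $q=r=2$ via Lemma~\ref{lem:VW} and any appeal to height reduction become unnecessary.
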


\begin{proof}
Suppose first that $c(x,y)$ is vertical. Since $c(x,y)$ is absolutely irreducible, we must have $c(x,y)=\lambda(x-x_0)$, for some nonzero integer $\lambda$. We then have to count primitive solutions of $a_2y^q+a_3z^r=-a_1x_0^p$. If $x_0\neq 0$, the curve $a_2y^q+a_3z^r+a_1x_0^p=0$ is absolutely irreducible. Indeed, viewing it as a polynomial in $y$ over $\K(z)$, 
Lemma~\ref{lem:capelli} would force $a_3z^r+a_1x_0^p$ to be a proper power; this is impossible because $a_3z^r+a_1x_0^p$ has a nonzero constant term and simple roots. Bombieri--Pila, in the form proven by Heath-Brown in~\cite[Theorem 15]{HeathBrown}, gives $O_{\eps,D}(B^{\ve}\min\{Y^{1/r},Z^{1/q}\})$ points of bounded height. If $x_0=0$, on the other hand, we have $a_2y^q+a_3z^r=0$. Since $\gcd(x,y,z)=1$, $y$ and $z$ must be coprime and there are $O_D(1)$ solutions. The horizontal case is identical.
\end{proof}

From now on, we assume that $c(x,y)$ is neither vertical nor horizontal, so that $\operatorname{div}(x)$ and $\operatorname{div}(y)$ are both well-defined.

\begin{lemma}\label{lem:monomial}
Theorem~\ref{thm:main} holds if 
$\supp(\operatorname{div}(x))$ and $\supp(\operatorname{div}(y))$ are comparable. 
\end{lemma}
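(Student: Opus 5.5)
Suppose, say, $\supp(\div(x))\subseteq\supp(\div(y))$ (the other case is symmetric). The plan is to show that in this case $x$ and $y$ are essentially multiplicatively dependent on $C$, so that $c(x,y)=0$ is a binomial curve $x^\alpha=\kappa y^\beta$. The integer points on it can then be parametrised and counted directly.

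First we reduce to the case where both supports consist only of points at infinity together with zeros of $x$ and $y$. Since $c$ is neither vertical nor horizontal, $x$ and $y$ are nonconstant functions on $C$.

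Let $P\in\supp(\div(x))$ be an affine point of $C$. Then $P$ is a zero of $x$, and by comparability it lies in $\supp(\div(y))$, so $y(P)\in\{0,\infty\}$. As $P$ is affine, $y(P)=0$.

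Now consider the affine points of $C$ with $y=0$ and $x\neq0$. Each such point corresponds to a root of $c(x,0)$ other than $x=0$. There are two possibilities.
\begin{itemize}
\item If no such point exists, then $c(x,0)$ is a monomial in $x$, and similarly $c(0,y)$ is a monomial in $y$.
\item Otherwise, we do not claim to get full dependence. Instead we argue directly: a point with $y=0$, $x=x_0\neq 0$ lies in $\supp(\div(y))$ but not in $\supp(\div(x))$. Inclusion in this direction is allowed, so we need a different argument.
\end{itemize}

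The cleaner route is to use the \emph{full} comparability. Take the function $u=x^{a}y^{-b}$, with exponents chosen to cancel the order at one point of $\supp(\div(x))$. Here I expect the intended argument is to apply Lemma~\ref{lem:Sunit} to a unit equation. If the supports are comparable, then $S=\supp(\div(y))$ makes both $x$ and $y$ into $S$-units. The relation $f=0$ gives $a_1x^p+a_2y^q=-a_3z^r$. On our points, however, we only need to count $(x,y)$, since $z$ then has at most $r$ choices.

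So the concrete plan is as follows.

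(i) If $\supp(\div(x))\subseteq\supp(\div(y))$, I expect to show that the only affine zeros of $y$ are at $x=0$ or that the curve is "monomial". Concretely, comparing affine zeros, $c(x,y)$ has all its intersection with the axes at the origin. Combined with the points at infinity being shared, a Newton-polygon argument then shows that the Newton polygon of $c$ is a segment. By absolute irreducibility this forces $c=\lambda(x^\alpha-\kappa y^\beta)$ with $\gcd(\alpha,\beta)=1$ and $\alpha+\beta$ or $\max(\alpha,\beta)=d$.

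(ii) On such a curve, primitive integer points satisfy $x=\pm s^\beta$ and $y=\pm t^\alpha$ up to bounded factors determined by $\kappa$. The height of $c$ is bounded by Lemma~\ref{lem:height-reduction}, so $\kappa$ contributes only $B^{\ve}$ through divisor counts. Substituting into $f$ gives a Fermat-type curve $a_1 s^{\beta p}\pm a_2 t^{\alpha q}+a_3z^r=0$ with $s,t$ determined by one parameter. In fact $x^\alpha=\kappa y^\beta$ with $\gcd(\alpha,\beta)=1$ forces $(x,y)$ to be determined by a single integer $w$ with $|w|\le \max\{X,Y\}^{1/\max(\alpha,\beta)}$.

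(iii) We then count. If $d\ge3$, or whenever $\max(\alpha,\beta)=d$, there are $O(\max\{X,Y\}^{1/d})$ choices of $w$. The equation in $(w,z)$ is one-variable-parametrised, $g(w)=-a_3z^r$ with $g$ a binomial-sum polynomial having simple nonzero roots. Bombieri--Pila (Lemma~\ref{lem:BP}), applied to the absolutely irreducible curve $g(w)+a_3z^r=0$ of degree $\ge dr$ (absolute irreducibility via Lemma~\ref{lem:capelli}), gives $B^{\ve}\max\{X,Y\}^{1/(dr)}$. For $d\in\{1,2\}$, the same argument gives $\max\{X,Y\}^{1/r}$.

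The main obstacle is step (i): turning comparability of supports into the statement that $c$ is binomial. I would try to do this through the valuations at points at infinity, showing that $\div(x^\beta)=\div(y^\alpha)$, so that $x^\beta/y^\alpha$ is constant.
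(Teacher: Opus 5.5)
There is a genuine gap in step (i). Comparability of supports does not force $c$ to be a binomial, and it does not give $\operatorname{div}(x^\beta)=\operatorname{div}(y^\alpha)$.

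The inclusion $\supp(\operatorname{div}(x))\subseteq\supp(\operatorname{div}(y))$ only tells you what you correctly derived at the outset: every affine zero of $x$ on $C$ is a zero of $y$, i.e.\ $c(0,y)=\delta y^e$. It says nothing about zeros of $y$ with $x\neq0$. So your assertion that all intersections with \emph{both} axes lie at the origin is unjustified, and the Newton polygon need not be a segment.

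For example, take $c(x,y)=y^3-x(x+1)(x+2)$.
\begin{itemize}
\item It defines a smooth cubic, neither vertical nor horizontal.
\item $x$ vanishes only at $(0,0)$, and $x,y$ both have simple poles at the three points at infinity, so $\supp(\operatorname{div}(x))\subsetneq\supp(\operatorname{div}(y))$.
\item But $y$ also vanishes at $(-1,0)$ and $(-2,0)$, where $x$ is a unit, so no relation $x^\beta=\kappa y^\alpha$ can hold.
\end{itemize}
A relation $x^\beta=\kappa y^\alpha$ is equivalent to $\beta\operatorname{div}(x)=\alpha\operatorname{div}(y)$, which is far stronger than comparable, or even equal, supports. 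Consequently the one-parameter description of the integer points in (ii) is not available, and neither is the count in (iii) built on it.

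The missing idea is that no further geometry is needed. The monomial shape of $c(0,y)$ suffices once you use the arithmetic of primitive points.
\begin{itemize}
\item Write $c=xc_1(x,y)+\delta y^e$ with $\delta\in\ZZ\setminus\{0\}$.
\item For $(x_0,y_0,z_0)$ counted by $S(X,Y,Z,f,c)$ you have $x_0\neq0$. Otherwise $\delta y_0^e=0$, which is either impossible ($e=0$) or forces $y_0=0$ and then $z_0=0$, contradicting primitivity.
\item Put $g=\gcd(x_0,y_0)$. Since $g^{\min\{p,q\}}\mid a_3z_0^r$ and $\gcd(g,z_0)=1$, you get $g\mid a_3$.
\item From $x_0\mid\delta y_0^e$ and $\gcd(x_0/g,y_0/g)=1$ you get $x_0\mid\delta g^e$, hence $x_0\mid\delta a_3^e$.
\end{itemize}
Since $H(c),H(f)\le B^{O_D(1)}$ by Lemma~\ref{lem:height-reduction}, the divisor bound leaves $O_{\ve,D}(B^\ve)$ choices for $x_0$. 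There are then $O_D(1)$ choices for $y_0$, because $x-x_0\nmid c$, and at most $r$ choices for $z_0$. This gives $\#S(X,Y,Z,f,c)\ll_{\ve,D}B^\ve$, which is stronger than both bounds in Theorem~\ref{thm:main}. This is exactly the paper's argument.
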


\begin{proof}
Assume that $\supp(\operatorname{div}(x))\subseteq\supp(\operatorname{div}(y))$.
	Let $\tilde{c}(y)=c(0,y)$, which is a nonzero polynomial since $c(x,y)$ is irreducible. Assume that there exists $y_0\in\overline{\Q}^*$ such that $\tilde{c}(y_0)=0$. Then 
	$P=(0,y_0)$ is in the support of $x$ but not of $y$, contradicting the assumption $\supp(\operatorname{div}(x))\subseteq\supp(\operatorname{div}(y))$. Thus it follows that 
	there is no such $y_0$, whence $\tilde{c}(y)=\delta y^e$ for $\delta\in \Q^*$ and $c(x,y)=xc_1(x,y)+\delta y^e$. Since $c\in \ZZ[x,y]$, it follows that in fact $\delta$ is a nonzero integer. 
	
	Let $(x_0,y_0,z_0)$ be a point counted by $S(X,Y,Z,f,c)$. 
	If $x_0=0$, then $\delta y_0^e=0$. If $e=0$, this is
impossible, while if $e>0$ it gives $y_0=0$, after which
$f=0$ forces $z_0=0$.
Hence we must have $x_0\neq 0$ and we put $g=\gcd(x_0,y_0)$. Note that $g\mid a_3$ since $\gcd(x_0,y_0,z_0)=1$. Let $x_0'=x_0/g$ and $y_0'=y_0/g$. Since $c(x_0,y_0)=0$, we have $gx_0'\mid \delta y_0'^eg^e$ and $x_0'\mid \delta y_0'^eg^{e-1}$, which implies $x_0'\mid \delta g^{e-1}$ since $\gcd(x_0',y_0')=1$. (If $e=0$, we have $x_0\mid \delta$.) 
	It follows that $x_0\mid \delta a_3^e$. Hence we have $O_{\ve,D}(B^{\ve})$ choices for $x_0$. For each such $x_0$, there are $O_D(1)$ choices for $y_0$, and then at most $r$ choices for $z_0$.
 The case $\supp(\operatorname{div}(y))\subseteq\supp(\operatorname{div}(x))$ is analogous.
\end{proof}

\subsection{Irreducibility}

Following our work in the previous section, we may assume that $c(x,y)$ is absolutely irreducible,
nonvertical and nonhorizontal, 
that $H(c),H(f)\leq B^{O_D(1)}$, 
and 
that $\supp(\operatorname{div}(x))$ and $ \supp(\operatorname{div}(y))$ are incomparable.
Moreover, we recall that $c$ has degree $d$.
We put $K=\overline{\Q}(C)$, on which the functions $x,y\in K$ are nonconstant.
Let 
\begin{equation}\label{eq:FF}
F(T)=a_3T^r+a_1x^p+a_2y^q\in K[T].
\end{equation} 
The main goal of this section is to determine when this polynomial is irreducible.
Given a function $h:C\to\mathbb{P}^1$, we define
\begin{equation}\label{eq:2deg}
 H_C(h)=\sum_{P\in C}\max\{0,-\ord_P(h)\}=\frac 12 \sum_{P\in C}\abs{\ord_P(h)}.
\end{equation}
In particular, we have $H_C(x), H_C(y)\leq d$.

\begin{lemma}\label{cor:Firr}
If $F(T)$ is reducible in $K[T]$, then
$\max\{p,q\}\leq4d^2$.
\end{lemma}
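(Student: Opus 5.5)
The plan is to turn reducibility of $F(T)$ into an $S$-unit equation in $K=\K(C)$ and then bound its height with the Brownawell--Masser theorem (Lemma~\ref{lem:Sunit}). Suppose $F(T)$ is reducible in $K[T]$. Then so is $T^r-\alpha$, where $\alpha=-(a_1x^p+a_2y^q)/a_3$. Since $\K\subseteq K$, Lemma~\ref{lem:capelli} and the remark following it give a prime $\ell\mid r$ and some $w\in K^*$ with
\[
a_1x^p+a_2y^q=w^\ell .
\]
Divide by $-a_1x^p$ and set
\[
u=\frac{a_2y^q}{a_1x^p},\qquad v=-\frac{w^\ell}{a_1x^p},
\]
so that $1+u+v=0$.

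First I check the hypotheses of Lemma~\ref{lem:Sunit}. Because $\supp(\div(x))$ and $\supp(\div(y))$ are incomparable, there is a point $P\in\supp(\div(y))\setminus\supp(\div(x))$. There $|\ord_P(u)|=q|\ord_P(y)|\geq q$. Likewise there is a point $P'\in\supp(\div(x))\setminus\supp(\div(y))$, where $|\ord_{P'}(u)|\geq p$. Hence $u$ is nonconstant, and so is $v=-1-u$. Moreover, by \eqref{eq:2deg},
\[
h:=H_C(u)=\tfrac12\sum_P|\ord_P(u)|\geq \tfrac{p+q}{2}\geq \tfrac{\max\{p,q\}}{2}.
\]
No proper subsum vanishes, since $u,v$ are nonconstant. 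The degree of the map $(1:u:v)$ is at least $H_C(u)=h$.

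Next I choose $S=\supp(\div(x))\cup\supp(\div(y))\cup\{\text{zeros of }w\}$, so that $u,v$ are $S$-units. Then I bound $|S|$ and the genus.
\begin{itemize}
\item The genus satisfies $g\leq (d-1)(d-2)/2$, since $C$ is the normalization of a plane curve of degree $d$.
\item The zeros of $x$ number at most $d$, and so do the zeros of $y$. The poles of $x$ and of $y$ all lie over the line at infinity, which gives at most $d$ further points. So the first two sets together contribute at most $3d$ points.
\item The zeros of $w$ not already in this set are zeros of $v$. Each such zero has order divisible by $\ell\geq2$, so there are at most $H_C(v)/\ell\leq h/2$ of them, using $H_C(v)=H_C(1+u)=h$.
\end{itemize}

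Lemma~\ref{lem:Sunit} then gives
\[
h\leq 2g-2+3d+\tfrac h2\leq d^2-3d+3d+\tfrac h2,
\]
so $h\leq 2d^2$. Combined with $h\geq\max\{p,q\}/2$, this yields $\max\{p,q\}\leq4d^2$.

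The main obstacle is the zeros of $w$, whose number is not a priori bounded in terms of $d$. The $\ell$-th power structure, with $\ell\geq2$, is exactly what allows their contribution to be absorbed as $h/2$. Obtaining the sharp constant $4d^2$ also depends on counting the poles of $x$ and $y$ together as at most $d$ points at infinity, which I will need to justify carefully on the normalization.
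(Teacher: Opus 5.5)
Your proof is correct and follows the paper's argument essentially step for step. Capelli gives $a_1x^p+a_2y^q=w^\ell$, and you form the $S$-unit equation $1+u+v=0$. You bound $|S|\le 3d+H/\ell$ and $2g-2\le d^2-3d$, so Brownawell--Masser yields $H\le 2d^2$, and incomparability of the supports supplies the lower bound in terms of $\max\{p,q\}$. Your variant using two distinct points to get $H\ge (p+q)/2$ is equivalent to the paper's $p\le|\ord_P(u)|\le 2H$ plus symmetry.

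Two small points should be made explicit. First, Capelli needs $\alpha\in K^*$, so you must know $a_1x^p+a_2y^q\neq 0$ in $K$ before invoking it; this follows at once from your own observation that $u$ is nonconstant, and the paper states it separately. Second, the count of at most $d$ points of $C$ over the line at infinity holds because the pullback of that line to the normalization is an effective divisor of degree $d$.
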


\begin{proof}
By Lemma~\ref{lem:capelli}, if $F(T)$ is reducible over $K[T]$, then there exists $\ell\geq2$ and $h\in K$ such that
\[
a_1x^p+a_2y^q+h^\ell=0
\]
in $K$.
We must have 
$a_1x^p+a_2y^q\neq0$ in $K$, since 
if $a_1x^p+a_2y^q$ vanishes identically on $C$, then $C$ would be an irreducible component of the projective closure of $a_1x^p+a_2y^q=0$, so that $\supp(\operatorname{div}(x))= \supp(\operatorname{div}(y))$. 
Put
\[
u=\frac{a_1x^p}{a_2y^q},
\qquad
v=\frac{h^\ell}{a_2y^q},
\qquad
H=H_C(u)=H_C(x^p/y^q).
\]
Then $1+u+v=0$, with no vanishing proper subsum. Moreover,
$v=-(1+u)$ has the same pole divisor as $u$.

Let $S$ be the set of zeros and poles of $u$ and $v$. The
poles of $h$ are contained among the poles of $x$ and $y$.
The zeros of $x$ and $y$ contribute at most
$H_C(x)+H_C(y)\leq2d$ points, while their poles lie above the
line at infinity and contribute at most $d$ points. Every
remaining zero of $h$ is a zero of $v$ of multiplicity at
least $\ell$, and hence there are at most $H/\ell$ such
points. It follows that 
\[
|S|\leq\frac H\ell+3d.
\]
The Brownawell--Masser theorem, as recorded in 
Lemma~\ref{lem:Sunit}, 
gives
\[
H\leq2g(C)-2+|S|
 \leq (d-1)(d-2)-2+\frac H\ell+3d
 =d^2+\frac H\ell.
\]
Consequently,
\[
H\leq\frac{\ell}{\ell-1}d^2\leq2d^2.
\]

Since the supports of $\div(x)$ and $\div(y)$ are
incomparable, there is a point $P\in C$ with
$\ord_P(x)\neq0$ and $\ord_P(y)=0$. Hence, by~\eqref{eq:2deg},
\[
p\leq|\ord_P(u)|\leq2H\leq4d^2.
\]
Interchanging $x$ and $y$ gives $q\leq4d^2$.
\end{proof}

We now show that the assumption $\max\{p,q\}\leq 4d^2$ is not necessary when the curve $c(x,y)=0$ is a line or a conic.

\begin{lemma}\label{lem:lineirr}
Suppose that $d=1$.
Then $F(T)$ is irreducible in $K[T]$.
\end{lemma}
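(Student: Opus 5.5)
Since $d=1$, the curve $C$ is the projective line, and $K=\K(C)$ is a rational function field, so the plan is to reduce irreducibility of $F(T)$ to a statement about polynomials in one variable and then apply Mason--Stothers (Lemma~\ref{lem:abc}) instead of the cruder genus bound used in Lemma~\ref{cor:Firr}.

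\emph{Setting up.} Write $c(x,y)=\alpha x+\beta y+\gamma_0$. Since $c$ is neither vertical nor horizontal, both $\alpha$ and $\beta$ are nonzero, so we may solve $y=\gamma x+\delta$ with $\gamma\neq0$. Then $K=\K(x)$, and $x$ and $y$ both have their only pole at $\infty$. Their zeros are $x=0$ and $x=-\delta/\gamma$. If $\delta=0$ the two supports coincide, contradicting incomparability, so $\delta\neq0$. Thus $x$ and $y=\gamma x+\delta$ are coprime linear polynomials in $\K[x]$.

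\emph{Reduction to a power.} Suppose $F(T)$ is reducible. By Lemma~\ref{lem:capelli}, together with the remark following it, there are a prime $\ell\geq2$ and $h\in K$ with $a_1x^p+a_2y^q=-h^\ell$. The left side is a polynomial, so $h\in\K[x]$ because $\K[x]$ is integrally closed. It is also nonzero, because $x$ and $y$ are coprime. Put
\[
A=a_1x^p,\qquad B=a_2y^q,\qquad C_0=h^\ell .
\]
These are pairwise coprime: $A$ and $B$ are powers of coprime linear forms, and a common factor of $C_0$ with $A$ (or with $B$) would also divide $B$ (respectively $A$). Moreover $A$ is nonconstant.

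\emph{Applying Mason--Stothers.} Let $N=\max\{\deg A,\deg B,\deg C_0\}\geq\max\{p,q\}\geq2$. We have $\deg\rad(ABC_0)\leq 2+\deg h\leq 2+N/\ell\leq 2+N/2$. Lemma~\ref{lem:abc} then gives $N\leq 1+N/2$, so $N\leq2$. Hence $p=q=2$, $\ell=2$, and $a_1x^2+a_2(\gamma x+\delta)^2$ is minus a square of a polynomial of degree at most $1$.

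\emph{The residual case.} This is the only case needing extra care, and here we compute directly. A polynomial of degree at most $2$ that is a nonzero constant times a square must have vanishing discriminant (a nonzero constant is not possible, since then $N$ would be attained only by $A,B$ with cancelling leading terms, and the argument still applies). The discriminant of $(a_1+a_2\gamma^2)x^2+2a_2\gamma\delta x+a_2\delta^2$ is $4a_2^2\gamma^2\delta^2-4a_2\delta^2(a_1+a_2\gamma^2)=-4a_1a_2\delta^2\neq0$. Hence it is not a square in $\K[x]$, and it is also not a constant, since its constant term is nonzero and its roots are simple. This contradiction shows that $F(T)$ is irreducible.

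The main point to check carefully is the degree bookkeeping when $p=q$ and the leading terms of $A$ and $B$ cancel. Taking $N$ as the maximum over all three degrees, including $\deg A=p$, handles this uniformly.
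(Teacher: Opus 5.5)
Your proof is correct and follows the paper's argument essentially step for step. You parametrize the line by $x$ with $y=\gamma x+\delta$ and $\gamma\delta\neq0$, use Capelli to get a polynomial $\ell$-th power, apply Mason--Stothers to force $p=q=2$ and $\deg h\le 1$, and rule out the residual case with the discriminant $-4a_1a_2\delta^2\neq0$. (Your aside about $h$ being constant is muddled but harmless: Mason--Stothers gives $N\le 1+\deg h$, so $\deg h\ge 1$ directly, and a constant has zero discriminant, so your discriminant argument already excludes that case.)
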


\begin{proof}
	Since $C$ is not vertical, $x$ gives a rational parameter on $C$. Since $C$ is not horizontal and $\supp(\operatorname{div}(x))\neq \supp(\operatorname{div}(y))$, after setting $t=x$ we may write $y=ut+v$ with $u,v\in\K^*$. Thus $K=\K(t)$, and it is enough to prove that $a_3T^r+a_1t^p+a_2(ut+v)^q$ is irreducible in $\K(t)[T]$. By Lemma~\ref{lem:capelli}, if $F(T)$ is reducible, then there exists $h\in\K(t)$ such that $a_1t^p+a_2(ut+v)^q= h(t)^\ell$, with $\ell\geq2$. In fact $h$ is a polynomial; indeed, 
	if $h\in \K(t)$ had a finite pole, so would $h^\ell$, whereas $a_1t^p+a_2(ut+v)^q$ is a polynomial.
	
	We now apply Lemma~\ref{lem:abc} to $a_1t^p+a_2(ut+v)^q- h(t)^\ell=0$. The three polynomials are pairwise coprime. (The factors $t$ and $ut+v$ are coprime because $v\neq0$.)
	Thus the Mason--Stothers theorem gives 
	\[\max\{p,q,\ell\deg h\}\leq \deg\rad(t^p(ut+v)^qh^\ell)-1.
	\] 
	Since $\deg\rad(t^p(ut+v)^qh^\ell)\leq 2+\deg h$, we obtain $\max\{p,q,\ell\deg h\}\leq \deg h+1$. Because $\ell\geq2$, this implies $\deg h\leq1$. Since $p,q\geq2$, we must have $p=q=2$, $\ell=2$, and $\deg h=1$.
	It remains to rule out this last possibility. If $p=q=2$, then $a_1t^2+a_2(ut+v)^2$ must be a square in $\K(t)$ for $F(T)$ to be reducible. Its discriminant is $-4a_1a_2v^2$, which is nonzero, so $a_1t^2+a_2(ut+v)^2$ cannot be a square. It now follows from Lemma~\ref{lem:capelli} that $F(T)$ is irreducible in $K[T]$.
\end{proof}

Assume now $d=2$. We say that $c(x,y)=0$ is an ellipse, hyperbola, or parabola according to whether the discriminant of the quadratic part of $c(x,y)$ is negative, positive, or zero, respectively. 

\begin{lemma}\label{lem:nopar}
Suppose that $d=2$ and $c(x,y)=0$ is not a parabola. Then Theorem~\ref{thm:main} holds.
\end{lemma}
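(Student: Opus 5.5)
The plan is to show that a non-parabolic conic $c(x,y)=0$ carries only $O_{\ve,D}(B^{\ve})$ integer points in the box. Since each pair $(x_0,y_0)$ gives at most $r$ values of $z$ with $f(x_0,y_0,z)=0$, this yields $\#S(X,Y,Z,f,c)\ll_{\ve,D}B^{\ve}$, which is stronger than \eqref{eq:main-bound2}. By Lemma~\ref{lem:height-reduction} we may assume that $c$ is absolutely irreducible and that $H(c)\leq B^{O_D(1)}$. No use of $F(T)$ or of its irreducibility is needed here; the whole argument is completing the square and then applying Lemma~\ref{lem:VW}.

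\textbf{Normalization.} Write $c(x,y)=Ax^2+bxy+Cy^2+Dx+Ey+G$ with $\Delta=b^2-4AC\neq 0$, since $c=0$ is not a parabola. If $A=0$, first make the substitution $x\mapsto x+ky$ for a small integer $k\in\{0,1,2\}$ chosen so that the new $x^2$ coefficient $A+bk+Ck^2$ is nonzero. This polynomial in $k$ has degree at most $2$ and is not identically zero, because $\Delta\ne0$ rules out $A=b=C=0$. The substitution is unimodular, it preserves $\Delta$ and absolute irreducibility, and it enlarges the box and the height only by a constant factor.

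\textbf{Completing the square.} With $A\neq0$, multiply $c$ by $4A$ to obtain
\[
4Ac=(2Ax+by+D)^2-\Delta y^2+2(2AE-bD)y+(4AG-D^2).
\]
Multiplying further by $\Delta$ and completing the square in $y$ gives an identity
\[
4A\Delta\, c(x,y)=\Delta u^2-v^2-N,
\]
where
\[
u=2Ax+by+D,\qquad v=\Delta y-(2AE-bD),
\]
and $N\in\ZZ$ is an explicit expression in the coefficients. Thus every integer point of $c=0$ in the box gives an integer solution of
\[
\Delta u^2-v^2=N,\qquad |u|,|v|\leq P:=B^{O_D(1)}.
\]
The map $(x,y)\mapsto(u,v)$ is injective because $A\Delta\neq0$.

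\textbf{Nondegeneracy and the count.} The step needing care is to check that $N\neq0$. If $N=0$, then $4A\Delta c=\Delta u^2-v^2$ factors over $\K$ into two linear forms (as $\Delta\neq0$), contradicting the absolute irreducibility of $c$. With $\Delta,N$ nonzero and of size $B^{O_D(1)}$, Lemma~\ref{lem:VW} (with $a=\Delta$, $b=-1$, $c=N$) bounds the number of $(u,v)$ by $O_\ve(B^{O_D(\ve)})$. Rescaling $\ve$ and multiplying by the at most $r\leq D$ choices of $z$ completes the proof. This argument covers ellipses and hyperbolas alike; only parabolas, where $\Delta=0$ and the reduction collapses to a one-variable square, escape it. That is why they are excluded from the statement.
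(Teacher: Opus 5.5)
Your argument is correct and is essentially the paper's. You complete the square to write a nonzero multiple of $c$ as $aU^2+bV^2-e$, where $U,V$ are affine linear forms with independent linear parts. You then observe that $(x,y)\mapsto(U,V)$ is injective into a box of side $B^{O_D(1)}$ and apply Lemma~\ref{lem:VW}; you also make explicit the check $e=N\neq0$ via absolute irreducibility, which the paper leaves implicit. One small slip: the shear $x\mapsto x+ky$ leaves the $x^2$-coefficient equal to $A$ and changes only the $y^2$-coefficient, to $Ak^2+bk+C$. To get new $x^2$-coefficient $A+bk+Ck^2$ you should use $y\mapsto y+kx$, or simply complete the square in $y$ first.
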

\begin{proof}
In this case there exist nonzero integers $A,a,b,e$ of size at most $B^{O_D(1)}$, and 
affine linear polynomials $U,V\in \ZZ[x,y]$ 
with coefficients of size at most $B^{O_D(1)}$
and whose linear parts are linearly independent,
such that 
 \[
A c(x,y)=aU(x,y)^2+bV(x,y)^2-e.
 \]
Moreover the map 
$(x,y)\mapsto (U(x,y),V(x,y))$ is injective, with an image that lies in a box of side length 
$B^{O_D(1)}$.
 We may now apply Lemma~\ref{lem:VW} to deduce that 
 $\#S(X,Y,Z,f,c)=O_{\ve,D}(B^\ve)$.
 \end{proof}

We may proceed under the assumption that $c(x,y)=0$ defines a parabola. In this case, points on $c(x,y)=0$ can be parametrized by $(x,y)=(\gamma_1(t),\gamma_2(t))$ with $\gamma_1,\gamma_2\in\K[t]$ such that $\max\{\deg(\gamma_1),\deg(\gamma_2)\}=2$. 
In this case we have $K=\K(C)=\K(t)$.

\begin{lemma}\label{lem:c00}
Suppose that $d=2$, that $c(x,y)=0$ is a parabola and that $c(0,0)=0$. Then Theorem~\ref{thm:main} holds.
\end{lemma}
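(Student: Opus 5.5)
The idea is to use the fact that a parabola through the origin has a polynomial parametrization in which both coordinates are divisible by the parameter. Primitivity of $(x,y,z)$ then forces that parameter to divide a fixed integer of size $B^{O_D(1)}$, and the divisor bound finishes the count. This gives $O_{\ve,D}(B^\ve)$ points, which is stronger than~\eqref{eq:main-bound2}.

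By Lemma~\ref{lem:height-reduction} we may assume that $c$ is absolutely irreducible and that $H(c),H(f)\leq B^{O_D(1)}$. Since $c(x,y)=0$ is a parabola, its quadratic part is a constant times the square of a linear form. Hence there is a nonzero integer $A$ with $|A|\leq B^{O_D(1)}$ such that
\[
Ac(x,y)=L(x,y)^2+M(x,y),
\]
where $L=\alpha x+\beta y$ and $M=\lambda x+\mu y$ are integral linear forms with coefficients of size $B^{O_D(1)}$. There is no constant term because $c(0,0)=0$. Irreducibility of $c$ forces $M\not\equiv 0$ and $M$ not proportional to $L$. Indeed, otherwise $Ac$ would be $L(L+\kappa)$ or $L^2$, which is reducible. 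Hence $\Delta=\alpha\mu-\beta\lambda$ is a nonzero integer with $|\Delta|\leq B^{O_D(1)}$.

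Take a point $(x_0,y_0,z_0)$ counted by $S(X,Y,Z,f,c)$ and put $t=L(x_0,y_0)\in\ZZ$. Then $M(x_0,y_0)=-t^2$, and Cramer's rule gives
\[
\Delta x_0=\mu t+\beta t^2=t(\mu+\beta t),
\qquad
\Delta y_0=-\lambda t-\alpha t^2=-t(\lambda+\alpha t).
\]
So $t\mid \Delta x_0$ and $t\mid\Delta y_0$, and therefore $t\mid \Delta\gcd(x_0,y_0)$.

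Next we control $g=\gcd(x_0,y_0)$. Since $g^{\min\{p,q\}}\mid a_1x_0^p+a_2y_0^q=-a_3z_0^r$ and $\gcd(g,z_0)=1$ by primitivity, we get $g\mid a_3$. Moreover $t\neq0$: if $t=0$ then $x_0=y_0=0$, whence $z_0=0$, contradicting primitivity. Hence $t$ is a nonzero divisor of $\Delta a_3$, an integer of size $B^{O_D(1)}$. By the divisor bound there are $O_{\ve,D}(B^{\ve})$ choices for $t$. Each $t$ determines $(x_0,y_0)$ via the displayed formulas, and then there are at most $r$ choices of $z_0$. This gives $\#S(X,Y,Z,f,c)\ll_{\ve,D}B^\ve$.

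The main point to check carefully is the normal form $Ac=L^2+M$ with integral forms and controlled height. This comes from completing the square with the quadratic part $a(\alpha' x+\beta' y)^2$, where $(\alpha',\beta')$ is primitive. Multiplying $c$ by $a$ gives $ac=(a\alpha' x+a\beta' y)^2+a\cdot(\text{linear part})$, so $A=a$ works and $H(c)\leq B^{O_D(1)}$ controls everything. The key structural input is that $c(0,0)=0$ kills the constant term, and this is exactly what makes $t$ divide both coordinates.
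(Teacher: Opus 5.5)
Your proof is correct and follows the paper's argument: complete the square to write the parabola as a square of a linear form plus a linear form, parametrize by $t=L(x,y)$, deduce that $t$ divides $\Delta\gcd(x_0,y_0)$ and hence $\Delta a_3$, and finish with the divisor bound. The differences are cosmetic. You scale by $A$ rather than writing $c=\delta L^2+M$ with $L$ primitive. You also read off $t\mid \Delta g$ directly from $\gcd(\Delta x_0,\Delta y_0)=|\Delta| g$, instead of going through the paper's substitution $t=gs$.
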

\begin{proof}
	Let $c(x,y)=\alpha x^2+\beta xy+\gamma y^2+\Delta x+\eta y+\upsilon$. We have $\upsilon=0$ and $\beta^2=4\alpha \gamma$.
	
	Since the quadratic part has rank one, there exist a nonzero
integer $\delta$ and a primitive integral linear form $L$ such
that
$
\alpha x^2+\beta xy+\gamma y^2=\delta L(x,y)^2,
$
with $|\delta|=\gcd(\alpha,\beta,\gamma)$.	
Then $c(x,y)=\delta L(x,y)^2+M(x,y)$, where $L(x,y)=\ell_1 x+\ell_2 y$ and $M(x,y)=m_1 x+m_2 y$ are linear forms with integer coefficients. Since $c(x,y)$ is absolutely irreducible, $L$ and $M$ are linearly independent. Set $t=L(x,y)$. Then, for points on the curve, we have $M(x,y)=-\delta t^2$. Hence
 \[
 x=\frac{m_2 t+\ell_2\delta t^2}{\ell_1m_2-m_1\ell_2} \hspace{20 pt} \text{and} \hspace{20 pt} y=\frac{-m_1 t-\ell_1\delta t^2}{\ell_1m_2-m_1\ell_2}.
 \]
 If $(x,y)\neq (0,0)$ is an integral point on $c(x,y)=0$, then $t=L(x,y)\in \Z_{\neq 0}$.
	Putting $D_0=\ell_1m_2-m_1\ell_2$ and $(x,y)=(gu,gv)$, where $\gcd(u,v)=1$, we 
may write $t=gs$, for some integer $s$. But then 
\[
D_0u=s(m_2+\ell_2\delta gs) \hspace{20 pt} \text{and} \hspace{20 pt}
D_0v=-s(m_1+\ell_1\delta gs).
\]
This implies that $s\mid D_0$, whence $t\mid D_0g$. It follows that $t\mid a_3D_0$, since $g\mid a_3$ for $(x,y,z)\in S(X,Y,Z,f,c)$.
Thus there are $O_{\ve,D}(B^{\ve})$ choices for $t$. Once $t$ is fixed, $x$ and $y$ are determined and there are then $O_D(1)$ choices for $z$.
\end{proof}

\begin{lemma}\label{lem:irrsquare}
 Let $\gamma_1,\gamma_2\in\K[t]$ be such that $\max\{\deg(\gamma_1(t)),\deg(\gamma_2(t))\}=2$. Assume that $\gcd(\gamma_1(t),\gamma_2(t))=1$ and that there exists nonzero $h\in \K[t]$ such that \[
 a_1\gamma_1(t)^p+a_2\gamma_2(t)^q=h(t)^m. 
 \]
 Then $m\leq 2$.
\end{lemma}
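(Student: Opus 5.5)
The plan is to apply the Mason--Stothers theorem (Lemma~\ref{lem:abc}) to the identity
\[
a_1\gamma_1(t)^p+a_2\gamma_2(t)^q-h(t)^m=0 ,
\]
and then to dispose of the single surviving borderline configuration by a direct factorisation. Suppose, for a contradiction, that $m\geq 3$.

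First I will check the hypotheses of Lemma~\ref{lem:abc}. The three terms are nonzero: $h\neq 0$ by assumption, and if $a_1\gamma_1^p=0$ then $\gamma_1=0$, so $\gcd(\gamma_1,\gamma_2)=1$ forces $\gamma_2$ to be constant, contradicting the degree assumption. They are pairwise coprime. Indeed, $\gamma_1^p$ and $\gamma_2^q$ are coprime because $\gamma_1,\gamma_2$ are. If a linear factor divided both $h$ and $\gamma_1$, then it would also divide $a_2\gamma_2^q=h^m-a_1\gamma_1^p$, which is impossible, and symmetrically for $\gamma_2$. At least one term is nonconstant, since one of the $\gamma_i$ has degree $2$. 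Moreover
\[
\deg\rad(\gamma_1^p\gamma_2^qh^m)\leq \deg\gamma_1+\deg\gamma_2+\deg h\leq 4+\deg h .
\]
Lemma~\ref{lem:abc} therefore gives
\[
\max\{p\deg\gamma_1,\ q\deg\gamma_2,\ m\deg h\}\leq 3+\deg h .
\]
The left-hand side is at least $2\cdot 2=4$, so $\deg h\geq 1$. Combined with $m\geq 3$, the bound $m\deg h\leq 3+\deg h$ gives $2\deg h\leq 3$, so $\deg h=1$. This in turn forces $4\leq\max\{p\deg\gamma_1,q\deg\gamma_2\}\leq 4$. Since the $\gamma_i$ have degree at most $2$ and $p,q\geq 2$, the case $p\geq 3$ with $\deg\gamma_1\geq 2$ is excluded. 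The same inequality constrains $q\deg\gamma_2$, and I expect this to leave us with $p=q=2$; the cases in which some $\deg\gamma_i\le 1$ must be sorted out by a short check of the degree bookkeeping.

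The main obstacle is this residual case $p=q=2$, where Mason--Stothers is not sharp enough, because $m\in\{3,4\}$ with $\deg h=1$ is numerically allowed. Here I will use the factorisation directly. Choose $\beta\in\K$ with $\beta^2=a_2/a_1$, so that
\[
a_1\gamma_1^2+a_2\gamma_2^2=a_1(\gamma_1+i\beta\gamma_2)(\gamma_1-i\beta\gamma_2)=h^m=\lambda(t-\alpha)^m .
\]
The two factors $\gamma_1\pm i\beta\gamma_2$ are coprime: a common root would be a common root of $\gamma_1$ and $\gamma_2$, since $\beta\neq0$. Their product is a power of the single linear form $t-\alpha$, so one factor must be a nonzero constant, and the other must be a constant multiple of $(t-\alpha)^m$. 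But each factor is a linear combination of $\gamma_1,\gamma_2$ and hence has degree at most $2$. This gives $m\leq 2$, contradicting $m\geq 3$.

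In fact this factorisation argument alone settles every case with $p=q=2$. So the structure of the proof will be: use Mason--Stothers to reduce to $p=q=2$ (and $\deg h=1$), and then finish with the factorisation above.
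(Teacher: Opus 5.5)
Your overall route is the paper's: Mason--Stothers first, then a factorisation of $a_1\gamma_1^2+a_2\gamma_2^2$ into two coprime factors over $\K$ of degree at most $2$. Your hypothesis checks, the derivation of $\deg h=1$, and the factorisation step are all correct.

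The gap is the reduction to $p=q=2$, which you leave as something you ``expect''. It does not follow from the inequality you actually derived. Write $d_i=\deg\gamma_i$. The bound $\max\{pd_1,qd_2\}\le 4$ still allows:
\begin{itemize}
\item $(d_1,d_2)=(2,1)$ with $p=2$ and $q\in\{3,4\}$;
\item $(d_1,d_2)=(2,0)$ with $p=2$ and $q$ arbitrary.
\end{itemize}
For instance, $p=2$, $q=m=4$, $d_1=2$, $d_2=\deg h=1$ satisfies every inequality you wrote down. These cases are not reached by your factorisation argument, which needs $p=q=2$. So as written, the proof does not exclude them.

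The loss comes from weakening $\deg\rad(\gamma_1^p\gamma_2^qh^m)\le d_1+d_2+\deg h$ to $4+\deg h$. If you keep $d_1+d_2$, then once $\deg h=1$ is known, Lemma~\ref{lem:abc} gives $\max\{pd_1,qd_2\}\le d_1+d_2$. The left-hand side is at least $4$, so $d_1+d_2\ge 4$, which means $d_1=d_2=2$. Then $2\max\{p,q\}\le 4$ gives $p=q=2$. This is exactly the paper's step: if $d_1+d_2\le 3$, the bound gives $\max\{pd_1,qd_2\}\le 3<4$. With this one line added, your proof is complete and essentially coincides with the paper's.
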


\begin{proof} 
Write $d_i=\deg(\gamma_i(t))$ and $A=\max\{pd_1,qd_2\}$. 
The three polynomials $\gamma_1^p,\gamma_2^q,h^m$ are pairwise coprime. Hence 
Lemma~\ref{lem:abc}
gives \[ A \leq d_1+d_2+\deg h-1 \leq d_1+d_2+\frac Am-1. \] Since $\max\{d_1,d_2\}=2$ and $p,q\geq2$, we have $A\geq4$. It follows that \[ A\leq\frac{3m}{m-1}, \] 
whence $m\leq4$. Suppose that $m\geq3$. 
Then 
\[ A\leq\frac32(d_1+d_2-1). 
\] 
If $d_1+d_2\leq3$, the right-hand side is at most $3$, contrary to $A\geq4$. Thus $d_1=d_2=2$, and the same inequality gives $\max\{p,q\}\leq9/4$. Therefore $p=q=2$
and $\deg h\leq1$. 
Over $\overline{\QQ}$, put $P_1=\sqrt{a_1}\,\gamma_1$ and $P_2=\sqrt{-a_2}\,\gamma_2$. 
Then \[ (P_1-P_2)(P_1+P_2)=h^m, \] and the two factors on the left are coprime. If $h$ is constant, then $P_1$ and $P_2$ are constant, a contradiction. If $\deg h=1$, coprimality implies that one of $P_1-P_2$ and $P_1+P_2$ is constant and the other is a constant multiple of $h^m$. It follows that $\deg P_1=m$, which contradicts the fact that $\deg P_1=2$. Hence $m\leq2$. \end{proof}

\begin{lemma}\label{lem:conicirr}
Suppose that $d=2$, that $c(x,y)=0$ is a parabola and that $c(0,0)\neq 0$. 
If 
$F(T)$ is reducible in $K[T]$, then $r$ is even and $F(T)=a_3(T^{r/2}-G(x,y))(T^{r/2}+G(x,y))$ is the product of two irreducible polynomials of degree $r/2$ in $K[T]$.
\end{lemma}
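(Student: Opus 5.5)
Since $c(x,y)=0$ is a parabola, I work in $K=\K(t)$ with $(x,y)=(\gamma_1(t),\gamma_2(t))$ and $\max\{\deg\gamma_1,\deg\gamma_2\}=2$. The plan is to use Capelli's criterion (Lemma~\ref{lem:capelli}) to reduce reducibility of $F(T)$ to a power condition, and then use Lemma~\ref{lem:irrsquare} to force that power to be a square.

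First I check the hypotheses of Lemma~\ref{lem:irrsquare}, namely that $\gcd(\gamma_1,\gamma_2)=1$. A common root $t_0$ would give $\gamma_1(t_0)=\gamma_2(t_0)=0$, so $(0,0)$ would lie on the curve, contradicting $c(0,0)\neq0$. Here I use that the parametrization is onto the affine curve, which holds for a parabola since its only point at infinity corresponds to $t=\infty$.

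Next, suppose $F(T)=a_3T^r+a_1\gamma_1^p+a_2\gamma_2^q$ is reducible in $K[T]$. Since $\K\subseteq K$, Lemma~\ref{lem:capelli} and the remark after it give
\[
-(a_1\gamma_1^p+a_2\gamma_2^q)/a_3=h^\ell
\]
for some $h\in K$ and some prime $\ell\mid r$. Absorbing the constant $-a_3$ into an $\ell$-th power over $\K$, I may write $a_1\gamma_1^p+a_2\gamma_2^q=h^\ell$. Because the left side is a polynomial in $t$, $h$ has no finite poles, so $h\in\K[t]$. Also $h\neq0$; otherwise $\supp(\div(x))=\supp(\div(y))$, as in the proof of Lemma~\ref{cor:Firr}. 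Lemma~\ref{lem:irrsquare} then gives $\ell\leq2$, so $\ell=2$, $r$ is even, and $-(a_1x^p+a_2y^q)/a_3=G^2$ with $G\in K$. This yields
\[
F(T)=a_3(T^{r/2}-G)(T^{r/2}+G).
\]

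The remaining step is to show that each factor $T^{r/2}\pm G$ is irreducible in $K[T]$. By Lemma~\ref{lem:capelli} again, if $T^{r/2}\mp G$ were reducible, then $\pm G=k^{\ell'}$ for some prime $\ell'\mid r/2$ and some $k\in K$. Since $G^2$ is a polynomial, $G$ is a polynomial, and hence so is $k$. Then $-(a_1\gamma_1^p+a_2\gamma_2^q)/a_3=k^{2\ell'}$, and after absorbing the constant, Lemma~\ref{lem:irrsquare} with $m=2\ell'\geq4$ gives a contradiction. I expect this to be the main obstacle, and it also requires care with the constant factors and with using the polynomial form of $h$ so that Lemma~\ref{lem:irrsquare} applies. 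In particular I must confirm that the exceptional $-4K^4$ case of Capelli is absorbed by the remark following Lemma~\ref{lem:capelli}. Both factors have degree $r/2$, which completes the statement; I take $G(x,y)$ to denote this $G\in K$.
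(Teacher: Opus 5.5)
Your proposal is correct and follows essentially the same route as the paper. Capelli's criterion reduces reducibility to $-(a_1\gamma_1^p+a_2\gamma_2^q)/a_3$ being an $\ell$-th power of a polynomial, Lemma~\ref{lem:irrsquare} forces $\ell=2$, and each factor $T^{r/2}\pm G$ is irreducible because otherwise $\alpha$ would be a $2m$-th power, again contradicting Lemma~\ref{lem:irrsquare}. There are two cosmetic differences: the paper takes the parameter to be $t=L(x,y)$, so that $G(x,y)=h(L(x,y))$ is explicitly a polynomial in $x,y$ (leaving $G\in K$ suffices for the statement), and your surjectivity remark is unnecessary, since a common root of $\gamma_1,\gamma_2$ already places $(0,0)$ on the curve.
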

\begin{proof}
 By the parametrization of parabolas, points on $c(x,y)=0$ can be parametrized by $(x,y)=(\gamma_1(t),\gamma_2(t))$ with $\gamma_1,\gamma_2\in \K[t]$ such that 
 $\max\{\deg(\gamma_1),\deg(\gamma_2)\}=2$. Since $c(0,0)\neq 0$, we must have $\gcd(\gamma_1(t),\gamma_2(t))=1$. Notice that $K=\K(C)=\K(t)$. 
 Using the same completion of squares parametrization as in 
the proof of Lemma~\ref{lem:c00}, we may choose the parameter so that $t=L(x,y)$ for a linear form $L$.
Let 
	\[\alpha(t)=-\frac{a_1\gamma_1(t)^p+a_2\gamma_2(t)^q}{a_3}\in K^*.
	\]
Suppose that $F(T)$ is reducible in $K[T]$. Then $\alpha(t)$ must be an $\ell$-th power for $\ell\geq 2$ in $K$, by Lemma~\ref{lem:capelli}, with $\ell\mid r$.
Moreover, its $\ell$-th root must be a polynomial since $\alpha(t)$ is a polynomial and has no finite poles. It follows from Lemma~\ref{lem:irrsquare} that $\ell=2$.
Thus $a_3^{-1}F(T)=(T^{r/2}-h(t))(T^{r/2}+h(t))$ for $h(t)^2=\alpha(t)$. 
	Each factor is irreducible. Indeed, if $T^{r/2}\pm h(t)$ were reducible, 
	then Lemma~\ref{lem:capelli} would imply that $\pm h(t)$ is an $m$-th power in $K$ for some $m\geq2$. But then we would have that $\alpha(t)=h(t)^2$ is a $(2m)$-th power in $K$, contradicting Lemma~\ref{lem:irrsquare}.
We conclude by putting $G(x,y)=h(L(x,y))$.
\end{proof}

\subsection{Completion of the proof of Theorem~\ref{thm:main}}

Recall that $B=\max\{X,Y,Z\}$, that $d\leq D$ is the degree of $c(x,y)$ and that 
 $2\leq p,q,r\leq D$.
As in the previous section, we may assume that $c(x,y)$ is absolutely irreducible,
nonvertical and nonhorizontal, 
that $H(c),H(f)\leq B^{O_D(1)}$, 
and 
that $\supp(\operatorname{div}(x))$ and $ \supp(\operatorname{div}(y))$ are incomparable.
We put $K=\overline{\Q}(C)$, on which the functions $x,y\in K$ are nonconstant, and we recall the definition~\eqref{eq:FF} of $F\in K[T]$.

\begin{lemma}\label{lem:irreducible-case}
	Let $\ve>0$. Assume that $F(T)$ is irreducible in $K[T]$. Then
	\[
	\#S(X,Y,Z,f,c)\ll_{\eps,D}B^{\ve}\max\{X,Y\}^{\frac{1}{dr}}.
	\]
\end{lemma}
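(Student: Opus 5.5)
The plan is to pass to the space curve cut out by $f=c=0$, which is absolutely irreducible because $F$ is irreducible, project it birationally to a plane curve in the coordinates $(\ell,z)$ with $\ell$ a linear form in $x,y$, and then count points in a lopsided box with Heath-Brown's box version of Bombieri--Pila~\cite[Theorem 15]{HeathBrown}.

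\emph{Step 1: the space curve.} Since $c$ is absolutely irreducible and $F(T)$ is irreducible in $K[T]$, the function field $L=K[T]/(F(T))$ is a field. It is the function field of an absolutely irreducible curve $C'\subset\mathbb{A}^3$ given by $c(x,y)=f(x,y,z)=0$, with $z\mapsto T$. The map $C'\to C$ has degree $r$. For a linear form $\ell=x+\lambda y$, the degree of $\ell\colon C\to\PP^1$ is $d$ for all but $O_D(1)$ values of $\lambda$. Hence $[L:\K(\ell)]=dr$.

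\emph{Step 2: a birational plane model.} I choose $\lambda\in\ZZ$ with $|\lambda|\ll_D 1$ so that the projection $\pi\colon C'\to\mathbb{A}^2$, $(x,y,z)\mapsto(\ell(x,y),z)$, is birational onto its image. This holds provided $\ell$ separates a generic pair of points of $C$. Two points of a fibre of $C'\to C$ then have different $z$-coordinates, since they are $z_0$ and $\zeta z_0$ with $z_0\neq0$ generically. Only $O_D(1)$ directions $\lambda$ are excluded, namely those of secant lines joining generic point pairs that also share $z$. Let $G(u,v)\in\ZZ[u,v]$ be the primitive absolutely irreducible polynomial defining $\overline{\pi(C')}$. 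Birationality gives $\deg_v G=[L:\K(\ell)]=dr$ and $\deg_u G=[L:\K(z)]$, and $\deg G\ll_D 1$. Every point of $S(X,Y,Z,f,c)$ maps to an integral point $(u,v)$ on $G=0$ with $|u|\le U:=(1+|\lambda|)\max\{X,Y\}$ and $|v|\le Z$.

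The fibres of $\pi$ over such points have size $O_D(1)$. Away from $O_D(1)$ exceptional points, birationality gives injectivity. Otherwise, once $u=\ell(x,y)$ is fixed, the line $\ell=u$ meets $c=0$ in at most $d$ points, since $c$ is not a multiple of $\ell-u$ by absolute irreducibility and $d\ge1$.

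\emph{Step 3: counting.} Heath-Brown's Theorem 15 bounds the integral points of $G=0$ in the box $[-U,U]\times[-Z,Z]$ by
\[
\ll_{\ve,D}(UZ)^{\ve}\exp\Big(\frac{\log U\log Z}{\log T}\Big),\qquad T=\max_{u^{e_1}v^{e_2}\in G}U^{e_1}Z^{e_2},
\]
uniformly in the coefficients of $G$. The monomial $v^{dr}$ occurs in $G$, because the leading coefficient in $v$ is a polynomial in $u$ and some monomial $u^{j}v^{dr}$ survives. Hence $T\ge Z^{dr}$, and the exponent is at most $\log U/(dr)$. This gives $\#S(X,Y,Z,f,c)\ll_{\ve,D}B^{\ve}\max\{X,Y\}^{1/(dr)}$. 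Strictly, one only gets $T\ge U^{j}Z^{dr}\ge Z^{dr}$, which is all that is needed.

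\emph{Main obstacle.} The delicate step is Step 2: making birationality of $\pi$, and the equality $\deg_v G=dr$, hold for an explicit $\lambda$ of size $O_D(1)$ that does not depend on the coefficients. I would argue that the set of bad $\lambda$ is contained in the zero set of a nonzero polynomial of degree $O_D(1)$ in $\lambda$. This comes from the resultant or discriminant conditions expressing that $\ell$ fails to separate fibres generically. It follows that some $\lambda\in\{0,1,\dots,O_D(1)\}$ works.
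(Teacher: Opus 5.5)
Your proposal is correct and follows essentially the paper's route. You choose an integer $\lambda\ll_D 1$ so that $w=x+\lambda y$ has degree $d$ on $C$ and $L=\K(w,z)$, pass to the absolutely irreducible plane model of degree $dr$ in $z$, and apply Heath-Brown's Theorem~15 using that some monomial $w^jz^{dr}$ occurs. The paper settles your ``main obstacle'' by the primitive element argument (each pair of distinct $\K(z)$-embeddings of $L$ excludes at most one $\lambda$), and it bounds the degree of the plane model via a resultant rather than via your projection-degree argument.
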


\begin{proof}
Let 
\[
A=\K[x,y]/(c),\qquad K=\operatorname{Frac}(A).
\]
Since $c$ is absolutely irreducible, $A$ is an integral domain. Moreover,
$a_3^{-1}F(T)$ is monic and irreducible in $K[T]$. It follows that
$A[z]/(F)$ is an integral domain, and hence that
$L=\operatorname{Frac}\bigl(A[z]/(F)\bigr)$
is well-defined and has degree
$[L:K]=r$ over $K$.
The element $z\in L$ is nonconstant, since otherwise $F(T)$ would have
a root in $K$, contrary to its irreducibility. Thus $L$ is a finite
extension of $\K(z)$, with 
$[L:\K(z)]=O_D(1)$,
by B\'ezout's theorem applied to the curve
$c(x,y)=f(x,y,z)=0$.

We next choose a suitable linear projection. Since
$L=\K(z)(x,y)$, the primitive element theorem shows that
$L=\K(z)(x+\lambda y)$
for all but $O_D(1)$ values of $\lambda\in\K$. Indeed, if
$\sigma_i\neq\sigma_j$ are two $\K(z)$-embeddings of $L$ into a
normal closure, then the condition
\[
\sigma_i(x+\lambda y)=\sigma_j(x+\lambda y)
\]
excludes at most one value of $\lambda$, and there are only $O_D(1)$
pairs of embeddings.

Let $c_d(x,y)$ denote the homogeneous degree $d$ part of $c$.
If
$c_d(-\lambda,1)\neq0$,
then the polynomial
\[
c_\lambda(w,y)=c(w-\lambda y,y)
\]
has degree exactly $d$ in $y$. Since the change of variables
$(x,y)\mapsto(w,y)=(x+\lambda y,y)$ is invertible, the polynomial
$c_\lambda(w,y)$ is irreducible in $\K[w,y]$. Consequently,
we have $[K:\K(x+\lambda y)]=d$.
After excluding a further $O_D(1)$ values, we may therefore choose
an integer $\lambda$, with $|\lambda|\ll_D1$, such that, on writing
$w=x+\lambda y$,
we have
\[
L=\K(z,w)
\qquad\text{and}\qquad
[K:\K(w)]=d.
\]
It follows that
$
[L:\K(w)]
=
[L:K][K:\K(w)]
=
rd $.

Let $P(T)\in\K(w)[T]$ be the minimal polynomial of $z$ over
$\K(w)$. Thus $\deg P=rd$. After clearing denominators and taking
the primitive part, we obtain an absolutely irreducible polynomial
$h\in\K[W,T]$
such that
$h(w,z)=0$, with $\deg_T h=rd$.
We claim that $\deg h=O_D(1)$. To see this, consider
\[
R(W,T)
=
\Res_Y\bigl(
c(W-\lambda Y,Y),
f(W-\lambda Y,Y,T)
\bigr).
\]
The two polynomials inside the resultant are coprime in
$\K(W,T)[Y]$, since otherwise the irreducible polynomial
$c(W-\lambda Y,Y)$ would divide
$f(W-\lambda Y,Y,T)$, which is impossible since the coefficient
of $T^r$ in the latter polynomial is the nonzero constant $a_3$.
Hence $R$ is nonzero. Moreover,
$\deg R=O_D(1)$ and $H(R)\leq B^{O_D(1)}$, since $|\lambda|\ll_D1$ and $H(c),H(f)\leq B^{O_D(1)}$.
Since $y$ is a common zero of
$c(w-\lambda Y,Y)$ and $f(w-\lambda Y,Y,z)$
in $L$, we have $R(w,z)=0$. The minimal polynomial $P(T)$ therefore
divides $R(w,T)$ in $\K(w)[T]$, and Gauss' lemma shows that
$h(W,T)$ divides $R(W,T)$ in $\K[W,T]$, whence 
$\deg h=O_D(1)$.

 If the curve 
 $h(W,T)=0$ is not defined over $\Q$, there are $O_{D}(1)$ integral points on it. Thus we may assume that $h\in\Z[W,T]$ is primitive and absolutely irreducible. 
 Moreover, since $h$
is a rational factor of $R$, the preceding height bound also gives
$H(h)\leq B^{O_D(1)}$.
Every point $(x,y,z)$ counted by $S(X,Y,Z,f,c)$ gives an integral
point $(w,z)$ on $h=0$ satisfying
\[
|w|\leq (1+|\lambda|)\max\{X,Y\}\ll_D \max\{X,Y\},
\qquad
|z|\leq Z.
\]
Conversely, for fixed $(w,z)$, the equation
$c(w-\lambda y,y)=0$
has at most $d$ solutions in $y$, since it has degree exactly $d$
in $y$. It therefore remains to count the relevant integral points
on $h(W,T)=0$.
For this we appeal to Heath-Brown’s lopsided curve estimate~\cite[Theorem~15]{HeathBrown}, which gives 
 \[
\ll_{\ve,D} B^{\ve}\exp\left(\frac{\log \max\{X,Y\}\log Z}{dr\log Z}\right)
\ll_{\ve,D}B^{\ve}\max\{X,Y\}^{\frac{1}{dr}}
 \]
relevant points.
\end{proof}

\begin{lemma}\label{lem:kummer-case}
	Let $\ve>0$. Assume that $F(T)$ is reducible in $K[T]$. Then
\[\#S(X,Y,Z,f,c)\ll_{\eps,D}\max\{X,Y\}^{\frac{1}{m(p,q)}+\eps},
\]
where $m(p,q)$ is defined in~\eqref{eq:mpq}.
\end{lemma}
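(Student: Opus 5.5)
The plan is to exploit the fact that reducibility of $F(T)$ forces $a_1x^p+a_2y^q$ to be an $\ell$-th power in $K$ for some prime $\ell\mid r$. We then use this to bound the number of points on the curve $C$ itself, ignoring $z$ entirely. Each point $(x,y)$ on $c=0$ gives at most $r$ values of $z$. So it suffices to count integral points $(x,y)$ with $|x|\le X$, $|y|\le Y$ on the plane curve $c(x,y)=0$ that extend to points of $S(X,Y,Z,f,c)$.

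\emph{Small degree.} First dispose of small $d$. If $d=1$, Lemma~\ref{lem:lineirr} says $F$ is irreducible, so there is nothing to prove. If $d=2$, Lemma~\ref{lem:nopar} and Lemma~\ref{lem:c00} handle all cases except a parabola with $c(0,0)\neq0$. In that remaining case Lemma~\ref{lem:conicirr} applies. Here one could repeat the argument of Lemma~\ref{lem:irreducible-case} with each factor $T^{r/2}\pm G(x,y)$, which has degree $r/2$. Alternatively, one can simply observe that the points lie on a parabola $t\mapsto(\gamma_1(t),\gamma_2(t))$, where $t=L(x,y)$ is an integer of size $\ll \max\{X,Y\}^{1/2}$ up to a bounded denominator. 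This gives $O(\max\{X,Y\}^{1/2})$ points. That is at most $\max\{X,Y\}^{1/m(p,q)}$ only when $m(p,q)\le 2$, i.e.\ $\max\{p,q\}\le 16$. So for larger $p,q$ this route is insufficient, and I would instead redo the Heath-Brown argument of Lemma~\ref{lem:irreducible-case} with degree $r/2$ in place of $r$. This is the one step where I am unsure the bookkeeping matches the stated bound. It may be that the authors route this case into the $\max\{X,Y\}^{1/r}$ term of \eqref{eq:main-bound2}.

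\emph{Degree $d\geq3$.} This is the main case. Lemma~\ref{cor:Firr} gives $\max\{p,q\}\le 4d^2$, hence
\[
d\ \ge\ \tfrac12\sqrt{\max\{p,q\}}=m(p,q).
\]
Now apply Lemma~\ref{lem:BP} to the absolutely irreducible curve $c(x,y)=0$ of degree $d\le D$. Taking the box $|x|,|y|\le\max\{X,Y\}$, we get
\[
\ll_{\varepsilon,D}\max\{X,Y\}^{1/d+\varepsilon}\le \max\{X,Y\}^{1/m(p,q)+\varepsilon}
\]
integral points. Each such point yields at most $r\le D$ triples $(x,y,z)$, which gives the claim. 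No height assumption is needed here, since Bombieri--Pila is uniform in the coefficients.

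The main obstacle is therefore the conic case ($d=2$) with reducible $F$. There the degree of $C$ is too small to exploit Bombieri--Pila, and one must instead use the factorization from Lemma~\ref{lem:conicirr} to project to a curve in $(t,z)$ of degree $r/2$ in $z$, and apply Heath-Brown's lopsided bound as before.
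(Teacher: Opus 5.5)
\textbf{Verdict: genuine but easily repaired gap, in the case $d=2$.}

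Your argument for $d\geq 3$ is exactly the paper's entire proof. Lemma~\ref{cor:Firr} gives $d\geq m(p,q)$. Then Lemma~\ref{lem:BP}, applied to $c(x,y)=0$ in the box of side $\max\{X,Y\}$, with at most $r$ values of $z$ per point, gives the bound.

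The gap comes from your belief that Lemma~\ref{cor:Firr} is only available for $d\geq 3$. It is not. Its proof uses only the standing hypotheses (absolutely irreducible, neither vertical nor horizontal, incomparable supports) and the inequality $2g(C)-2\leq (d-1)(d-2)-2$. That inequality holds, with equality, when $d=1,2$. So if $d=2$ and $F$ is reducible, you already have $\max\{p,q\}\leq 16$, i.e.\ $m(p,q)\leq 2=d$. Lemma~\ref{lem:BP} then gives $\max\{X,Y\}^{1/2+\varepsilon}\leq \max\{X,Y\}^{1/m(p,q)+\varepsilon}$, uniformly in the coefficients. The configuration you call the main obstacle (a conic with $F$ reducible and $\max\{p,q\}>16$) never arises. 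The same two-line argument therefore covers every $d$; for $d=1$ you may also cite Lemma~\ref{lem:lineirr}, as you do.

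As written, your substitute treatment of $d=2$ does not prove the stated bound. Lemmas~\ref{lem:nopar} and~\ref{lem:c00}, and the Heath-Brown argument via Lemma~\ref{lem:conicirr}, establish \eqref{eq:main-bound2}. That gives bounds of the shape $O(B^{\varepsilon})$ and $B^{\varepsilon}\max\{X,Y\}^{1/r}$ with $B=\max\{X,Y,Z\}$, which is a different statement, and it falls short here in two ways:
\begin{itemize}
\item Without the input $\max\{p,q\}\leq 16$, the exponent $1/r$ need not be at most $1/m(p,q)$; take $r=2$ and $p>16$.
\item The factor $B^{\varepsilon}$ is not controlled by $\max\{X,Y\}^{\varepsilon}$ when $Z$ is large.
\end{itemize}
Your parametrisation count $O(\max\{X,Y\}^{1/2})$ for the parabola also has constants depending on the coefficients of $c$, whereas Lemma~\ref{lem:BP} is uniform. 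So drop the small-degree case analysis and apply Lemma~\ref{cor:Firr} for all $d$.
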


\begin{proof}
It follows from Lemma~\ref{cor:Firr} that $d\geq m(p,q)$. The bound now follows from applying Lemma~\ref{lem:BP} to $c(x,y)=0$.
\end{proof}

\begin{proof}[Proof of Theorem~\ref{thm:main}]
We first prove~\eqref{eq:main-bound}. If $F$ is irreducible over $K[T]$ then 
the desired bound follows from Lemma~\ref{lem:irreducible-case}. 
Alternatively, if $F$ is reducible then we can apply Lemma~\ref{lem:kummer-case}.

It remains to prove~\eqref{eq:main-bound2}. When $d=1$ the desired bound follows from combining 
 Lemmas~\ref{lem:lineirr} and~\ref{lem:irreducible-case}.
 Suppose next that $d=2$. We can assume that $c(x,y)=0$ is a parabola
 with $c(0,0)\neq0$, by Lemmas~\ref{lem:nopar} and~\ref{lem:c00}. It follows from Lemma~\ref{lem:conicirr} that $F(T)$ is either irreducible or the product of two irreducible factors of degree $r/2$. In the first case, we may apply Lemma~\ref{lem:irreducible-case}. In the second case, we prove Lemma~\ref{lem:irreducible-case} for $T^{r/2}\pm G(x,y)$ instead of $a_3T^r+a_1x^p+a_2y^q$ and get the bound
 \[
 \#S(X,Y,Z,f,c)\ll_{\eps,D}B^{\ve}\max\{X,Y\}^{\frac{2}{2r}},
 \]
 which is satisfactory.
\end{proof}

\section{Counting points on the generalized Fermat surface}

We now have everything in place to prove Theorem~\ref{thm:main2}. 
Let $X,Y,Z\geq 2$ and put $B=\max\{X,Y,Z\}$. Set $D=\max\{p,q,r\}$.
Recall the definition 
\eqref{eq:W} of $W$. We shall apply the determinant method.
According to~\cite[Theorem~1.1]{bv}, 
which is based on work of Salberger~\cite{saldet}, there exist polynomials
$f_1,\ldots, f_J\in \Z[x,y,z]$, 
and a finite collection $Q$ of points such that 
the following
hold:
\begin{enumerate}
	\item $J\ll_{\ve,D} B^\ve W $.
	\item
	Each $f_j$ is coprime to $f$ and has degree 
	$O_{\ve,D}(\log B)$, for $j\leq J$. 
	\item
	$\#Q\ll_{\ve,D} B^\ve W^{2}$.
	\item
	For each $(x,y,z)\in S(X,Y,Z,f)\setminus Q$, we have
	$f_j(x,y,z)=0$,
	for some $j\leq J$.
\end{enumerate}
Recalling the definition~\eqref{eq:hen} of 
$S(X,Y,Z,f)$, 
it therefore follows that 
\[
\#S(X,Y,Z,f)\ll_{\ve,D} B^\ve W^2+\sum_{j\leq J} N_j(X,Y,Z),
\]
where 
$B=\max\{X,Y,Z\}$ and 
$N_j(X,Y,Z)$ is the number of $(x,y,z)\in S(X,Y,Z,f)$ such that $f_j(x,y,z)=0$.

The variety cut out by $f(x,y,z)=f_j(x,y,z)=0$ is a curve in $\mathbb{A}^3$, with $O_{\ve,D}(\log B)$ irreducible components, each of degree $O_{\ve,D}(\log B)$.
For each auxiliary polynomial $f_j$, define 
\[ 
R_j(x,y)= \begin{cases} \Res_z(f,f_j) & \text{if $\deg_z f_j>0$},\\ 
f_j(x,y) & \text{if $\deg_z f_j=0$}. 
\end{cases} 
\] 
Since $f$ is absolutely irreducible and $f_j$ is coprime to $f$, the polynomial $R_j$ is nonzero. Factor its primitive part over $\QQ$ as 
\[ 
R_j(x,y)=\prod_{m=1}^{s_j}c_{j,m}(x,y)^{e_{j,m}}, 
\] 
where the $c_{j,m}$ are pairwise non-associate and irreducible over $\QQ$. 
Every point counted by $N_j(X,Y,Z)$ satisfies $R_j(x,y)=0$, and therefore 
\[ 
N_j(X,Y,Z) \leq \sum_{m=1}^{s_j} \#S(X,Y,Z,f,c_{j,m}),
\] 
in the notation of~\eqref{eq:duck}.
The standard degree bound for resultants gives 
\[ 
\sum_{m=1}^{s_j}\deg c_{j,m} \ll_D \deg f_j \ll_{\varepsilon,D}\log B. 
\] 
Thus both the number and the degrees of the factors are $O_{\varepsilon,D}(\log B)$. 
If $\deg c_{j,m}\geq D$
 then~\cite[Theorem~2]{bincluck} gives 
\[ 
\#S(X,Y,Z,f,c_{j,m})
\ll_\ve (\deg c_{j,m})^2 \max\{X,Y\}^{\frac{1}{\deg c_{j,m}}+\ve} \ll_{\ve,D}
B^\varepsilon\max\{X,Y\}^{\frac{1}{r}}, 
\] 
since $r\leq D\leq \deg c_{j,m}\ll_{\ve,D} \log B$. 
If $\deg c_{j,m}\in \{1,2\}$, 
we apply~\eqref{eq:main-bound2} in Theorem~\ref{thm:main} to get
\[
\#S(X,Y,Z,f,c_{j,m})\ll_{\ve,D}B^{\ve}\max\{X,Y\}^{\frac{1}{r}},
\]
which is satisfactory. Finally, we may suppose that 
 $3\leq \deg c_{j,m}< D$. But then it follows from 
combining~\eqref{eq:main-bound} with 
 Lemma~\ref{lem:BP} that 
 \[
\#S(X,Y,Z,f,c_{j,m})\ll_{\ve,D} B^{\ve}\left(\max\{X,Y\}^{\min\left\{\frac{1}{3},\frac{1}{m(p,q)}\right\}}+\max\{X,Y\}^{\frac{1}{r}}\right),
\]
where $m(p,q)$ is given by~\eqref{eq:mpq}.
Clearly $\min\{\frac{1}{3},\frac{1}{m(p,q)}\}=\frac{2}{\sqrt{\max\{p,q,36\}}}$. 
Finally, on summing over $m$ and $j$, and absorbing powers of $\log B$ into 
$B^\varepsilon$, we obtain the bound 
\[ 
\sum_{j\leq J}N_j(X,Y,Z) \ll_{\varepsilon,D} B^\varepsilon W \left( \max\{X,Y\}^{\frac{2}{
\sqrt{\max\{p,q,36\}}}}
+ \max\{X,Y\}^{\frac{1}{r}} \right). 
\] 
This finally concludes the proof of Theorem~\ref{thm:main2}.

\begin{remark}\label{rem:rem}
The reducible case admits a small refinement. Put
\[
\alpha=-\frac{a_1x^p+a_2y^q}{a_3},
\]
and let $k\geq2$ be the largest divisor of $r$ such that
$\alpha=h^k$ for some $h\in K$. By maximality of $k$ and
Lemma~\ref{lem:capelli}, each factor in
\[
a_3^{-1}F(T)
 =T^r-h^k
 =\prod_{\zeta^k=1}\bigl(T^{r/k}-\zeta h\bigr)
\]
is irreducible over $K$. Repeating the proof of
Lemma~\ref{lem:irreducible-case} on each component therefore gives
\[
\#S(X,Y,Z,f,c)
 \ll_{\ve,D}
 B^\ve\max\{X,Y\}^{\frac{k}{rd}}.
\]
The proof of Lemma~\ref{cor:Firr} also gives
$\max\{p,q\}\leq 2kd^2/(k-1)$,
whence
\[
\frac{k}{rd}
 \leq
 \sqrt{\frac{2k^3}{r^2(k-1)\max\{p,q\}}}
 \leq
 \sqrt{\frac{2r}{(r-1)\max\{p,q\}}}.
\]
Thus
\[
\#S(X,Y,Z,f,c)
 \ll_{\ve,D}
 B^\ve\max\{X,Y\}^{\frac{1}{m_r(p,q)}},
 \qquad
 m_r(p,q)=
 \sqrt{\frac{(r-1)\max\{p,q\}}{2r}}.
\]
This recovers Lemma~\ref{lem:kummer-case} when $r=2$, but improves it for  $r\geq3$. Thus, the exponent $1/m(p,q)+\ve$ can be improved to $1/m_r(p,q)+\ve$ in~\eqref{eq:main-bound}, and $\max\{p,q,36\}$ in Theorem~\ref{thm:main2} can be replaced by 
\[
\max\left\{\frac{2(r-1)p}{r},\frac{2(r-1)q}{r},36\right\}.
\]
\end{remark}

\section{Sums of three powerful numbers}

Our goal in this section is to prove Theorem~\ref{t:uv-main} using Theorem~\ref{thm:main2}. We shall begin by recording a general transference principle, which shows precisely how 
uniform upper bounds for points on the generalized Fermat surface feed into non-trivial upper bounds for 
the counting function $N(B)$ defined in~\eqref{eq:hawk}.

\begin{theorem}\label{prop:etagen}
Let $p\geq q\geq r\ge 2$. Assume that there is a fixed $\kappa\geq 0$ such that, uniformly for nonzero $\lambda,\mu,\nu\in\mathbb Z$, for every $0\le k\le r-1$, for all $X,Y,Z\geq 2$ and $\ve>0$, we have 
\[
\#\left\{(x,y,z)\in \mathbb N^3:
\begin{array}{l}
x\le X,\ y\le Y,\ z\le Z,\ \gcd(x,y,z)=1,\\
\lambda x^p+\mu y^q=\nu z^{r+k}
\end{array}
\right\}
\ll_{\varepsilon,p}
H^{\ve}\max\{X,Y\}^{\kappa}
\]
with $H=\max\{X,Y,Z\}$.
Then 
\[
N(B)\ll_{\delta, p}B^{\frac{1}{p}+\frac{1}{q}-\delta},
\]
for any 
\[
\delta<
 \frac{
 \displaystyle
 \frac1p+\frac1q-
 \left(
 \frac1r-\frac1{qr}+\frac{\kappa}q
 \right)}
 {p+q+1+1/r}.
\] 
\end{theorem}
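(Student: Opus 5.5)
The plan is to decompose each of $a$, $b$, $c$ into a large ``pure power'' part and a small cofactor. When the cofactors are small we apply the hypothesis to the resulting Fermat-type equation; when some cofactor is large we use the trivial count, which is then smaller. A single balancing parameter $T=B^{\theta}$ is optimised at the end.

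\emph{Decomposition.} Every $p$-full integer $a$ can be written uniquely as $a=\prod_{i=0}^{p-1}a_i^{p+i}$ with the $a_i$ squarefree and pairwise coprime. Put $x=\prod_i a_i$ and $\lambda=\prod_{i\ge1}a_i^{i}$, so that $a=\lambda x^p$. Do the same for $b=\mu y^q$. For $c=\prod_{k=0}^{r-1}c_k^{r+k}$ we instead isolate one factor: choose $k$ so that $c_k^{r+k}$ is the largest of the $r$ factors, and write $c=\nu z^{r+k}$ with $z=c_k$. This choice of $k$ is why the hypothesis is needed for every exponent $r+k$ with $0\le k\le r-1$.

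\emph{Applying the hypothesis.} Then $\lambda x^p+\mu y^q=\nu z^{r+k}$, with $x\le (B/\lambda)^{1/p}$ and $y\le (B/\mu)^{1/q}$. Since $\gcd(a,b,c)=1$ we get $\gcd(x,y,z)=1$, and $z\ge 1$ is bounded by a power of $B$. Applying the assumed bound with $X=(B/\lambda)^{1/p}$ and $Y=(B/\mu)^{1/q}$, for fixed $(\lambda,\mu,\nu)$ there are $\ll B^{\varepsilon}B^{\kappa/q}$ solutions, using $\max\{X,Y\}\le B^{1/q}$ as $p\ge q$. The loss $B^{\kappa/q}$ is what appears in the stated threshold for $\delta$.

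\emph{Counting the small-cofactor triples.} We must bound the number of admissible triples $(\lambda,\mu,\nu)$. The count of $\lambda\le T$ and $\mu\le T$ is $O(T)$ each. The cofactor $\nu$ should not be enumerated freely, since it could be as large as $B$. Instead, for fixed $(\lambda,\mu)$ we count the pairs $(\nu,z)$ arising from $r$-full $c\le B$. The factor $\frac1r-\frac1{qr}$ in the threshold suggests the following sharper route: for fixed $b=\mu y^q$, the number of admissible $c$ is governed by the $r$-full count $\ll B^{1/r}$. Combined with the freedom in $y$, the ``$-\frac1{qr}$'' should come from $\gcd$ or size savings when $z$ is the dominant factor. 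I would make this precise by splitting dyadically according to the sizes of $\lambda$, $\mu$, $\nu$ and $z$. The target intermediate bound is
\[
\#\{\text{small-cofactor solutions}\}\ll B^{\varepsilon}\,T^{O(1)}\,B^{\frac1r-\frac1{qr}+\frac{\kappa}{q}},
\]
where the $O(1)$ in the exponent of $T$ must come out so that, together with the next step, the balancing yields the denominator $p+q+1+1/r$.

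\emph{Large-cofactor contribution and balancing.} If $\lambda>T$, the number of $p$-full $a\le B$ with non-pure part $\lambda>T$ is $\ll B^{1/p}T^{-1/p}$ roughly, since the part $a_i^{p+i}$ with $i\ge1$ saves a factor relative to $x^p$. The same holds for $\mu>T$. Multiplying by the trivial count for the other variable gives $\ll B^{1/p+1/q}T^{-c}$ for an explicit $c$ that I expect to be of size about $1/(pq)$, which explains the factor $p+q$ in the denominator. Equating this with the small-cofactor bound gives $\theta$, and the resulting saving is exactly $\delta$ below the displayed threshold.

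\emph{Main obstacle.} The delicate step is the bookkeeping: making the large-cofactor tail save precisely enough, and making the count over $\nu$ (with $k$ chosen dominant) cost only $B^{1/r-1/(qr)}$ rather than $B^{1/r}$. I would first try to control $\nu$ via the constraint $\nu\mid c$ together with $c=a+b$ for fixed $(a,b)$-shapes, and then adjust the dyadic splitting until the exponents match the stated formula.
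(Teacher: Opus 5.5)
There is a genuine gap at exactly the step you flag as the main obstacle: the proposal never bounds the number of admissible cofactors $\nu$. Your ``target intermediate bound'' carries an unspecified factor $T^{O(1)}$, and the route you suggest for it (using $\nu\mid c$ and $c=a+b$) is not what is needed.

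The missing observation is already built into your choice of $k$. Since $c_k^{r+k}$ is the largest of $r$ factors whose product is $c$, we have $c_k^{r+k}\ge c^{1/r}$, hence $\nu=c/c_k^{r+k}\le B^{1-1/r}$. Moreover $\nu=\prod_{j\neq k}c_j^{r+j}$ is $r$-full, so there are only $\ll B^{(r-1)/r^2}$ possibilities for $\nu$. This is at most $B^{1/r-1/(qr)}$ because $q\ge r$.

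The paper reaches a comparable cost differently. It works in dyadic boxes and uses the trivial bound $\Delta\le S+U$ in a box with $\Delta>\frac1p+\frac1q-\eta$ to force $U\ge\frac1q-\eta$. It then chooses $k$ with $\gamma_k$ maximal, so that $\gamma_k\ge U/r$, and concludes $U-\gamma_k\le\frac1r-\frac1{qr}+\frac{\eta}{r}$.

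The balancing step is also set up incorrectly. Your own tail estimate gives a saving of $T^{-1/p}$ when $\lambda>T$ and $T^{-1/q}$ when $\mu>T$, not something of size $T^{-1/(pq)}$. With one common threshold, the small-cofactor range costs $T^2$, which leads to the denominator $2p+1$. This exceeds $p+q+1+1/r$ whenever $p>q$, so in general (for instance when $q=r<p$) you would not reach the stated range of $\delta$.

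The factor $p+q$ comes from using separate thresholds $\lambda\le B^{ps}$ and $\mu\le B^{qs}$. Both tails are then $\ll B^{1/p+1/q-s+\varepsilon}$. The remaining range contributes $\ll B^{(p+q)s+(r-1)/r^2+\kappa/q+\varepsilon}$ by the hypothesis. Balancing gives $s=\bigl(\frac1p+\frac1q-\frac{r-1}{r^2}-\frac{\kappa}{q}\bigr)/(p+q+1)$, which is at least the threshold in the statement whenever that threshold is positive.

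In the paper the same $p+q$ appears internally. It comes from the bounds $S-\alpha_0\le p\eta$ and $T-\beta_0\le q\eta$ on the non-pure variables (here $S,T$ are the sums of dyadic exponents, not your threshold). These are deduced from $\Delta\le S+T$ in any box with $\Delta>\frac1p+\frac1q-\eta$.

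With these two repairs your threshold argument would be a valid non-dyadic alternative. As written, it asserts the final exponent rather than deriving it.
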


\begin{proof}
Any nonzero $m$-full integer $n\in \mathbb{N}$ can be written uniquely in the form
\[
n = v_0^{m} \prod_{s = 1}^{m-1} v_s^{m + s},
\]
for $v_0, v_1, \ldots, v_{m - 1} \in \mathbb{N}$, such that $\mu^2(v_s) =1$ for $1 \leq s \leq m-1$ and $\gcd(v_s, v_{s'}) = 1$ for $1 \leq s < s' \leq m-1$.
Recall 
from~\eqref{eq:hawk} that 
\[
N(B)=\#\{(a,b,c)\in (\mathcal{S}_p\times \mathcal{S}_q\times \mathcal{S}_r)\cap[1,B]^3: \gcd(a,b,c)=1,~a+b=c\}.
\]
We adopt the factorisation
\[
a=x_0^p\prod_{i=1}^{p-1}x_i^{p+i},\qquad
b=y_0^q\prod_{j=1}^{q-1}y_j^{q+j},\qquad
c=z_0^r\prod_{k=1}^{r-1}z_k^{r+k}.
\]
The squarefree and coprimality conditions in this parametrisation make it unique;
for upper bounds, they can only reduce the number of admissible tuples. 

We decompose all variables dyadically. On a fixed dyadic box, write
\[
x_i\sim B^{\alpha_i},\qquad y_j\sim B^{\beta_j},\qquad z_k\sim B^{\gamma_k}.
\]
Let
\begin{equation}\label{eq:UVW}
S=\sum_{i=0}^{p-1}\alpha_i,\qquad
T=\sum_{j=0}^{q-1}\beta_j,\qquad
U=\sum_{k=0}^{r-1}\gamma_k.
\end{equation}
The conditions $a,b,c\le B$ imply
\[
\sum_{i=0}^{p-1}(p+i)\alpha_i\le 1,\qquad
\sum_{j=0}^{q-1}(q+j)\beta_j\le 1,\qquad
\sum_{k=0}^{r-1}(r+k)\gamma_k\le 1.
\]
In particular, we have 
\[
S\le \frac1p,\qquad T\le \frac1q,\qquad U\le \frac1r.
\]

Fix a dyadic box, and suppose that its contribution is
$O_{p}(B^{\Delta})$ on this box. 
The argument behind the trivial bound~\eqref{eq:triv} easily gives
\begin{equation}\label{eq:triv'}
\Delta\le S+T,\qquad \Delta\le S+U,\qquad \Delta\le T+U.
\end{equation}
The fourth bound we will use comes from fixing every variable except $x_0,y_0,z_k$, for a choice of $0\le k\le r-1$. The equation becomes
\[
\lambda x_0^p+\mu y_0^q=\nu z_k^{r+k}
\]
with fixed nonzero coefficients $\lambda,\mu,\nu$. Since $\gcd(a,b,c)=1$, we have
$\gcd(x_0,y_0,z_k)=1$. Therefore, for every $0\le k\le r-1$, it follows from the hypothesis of the theorem that 
\begin{equation}\label{eq:4way}
\Delta
\le
S+T+U-\alpha_0-\beta_0-\gamma_k
+\kappa\max\{\alpha_0,\beta_0\}+\ve.
\end{equation}

Suppose that
\[
\Delta> \frac 1p+\frac 1q-\eta,
\]
where 
\[
\eta=
 \frac{
 \displaystyle
 \frac1p+\frac1q-
 \left(
 \frac1r-\frac1{qr}+\frac{\kappa}q
 \right)}
 {p+q+1+1/r}\in \R. 
\]
Since $\Delta\le S+T$, by~\eqref{eq:triv'}, 
while $S\le 1/p$ and $T\le 1/q$, it follows that 
\[
S\ge \frac1p-\eta,\qquad T\ge \frac1q-\eta.
\]
Moreover, since $S=\alpha_0+\sum_{i=1}^{p-1}\alpha_i$, we have
$
1\ge pS+(S-\alpha_0).
$
Hence
\begin{equation}\label{eq:Ua0}
 S-\alpha_0\le 1-pS\le p\eta.
\end{equation}
Similarly,
\begin{equation}\label{eq:Vb0}
T-\beta_0\le q\eta.
\end{equation}

The bound $\Delta\le S+U$ 
in~\eqref{eq:triv'}
also gives
$
U\ge \Delta-S\ge \frac1q-\eta.
$
Let
$M=\max_{0\le k\le r-1}\gamma_k$,
and choose $k$ such that $\gamma_k=M$. Since there are $r$ variables on the $z$-side, we obtain
\begin{equation}\label{eq:MWr}
 M\ge \frac Ur\ge \frac1{qr}-\frac{\eta}{r}.
\end{equation}
Combining~\eqref{eq:4way},~\eqref{eq:Ua0} and~\eqref{eq:Vb0}, we obtain
\[
\Delta
\le
(p+q)\eta+(U-M)
+\kappa\max\{\alpha_0,\beta_0\}+\ve.
\]
Since $p\ge q$, we have 
$
\max\{\alpha_0,\beta_0\}
\le
\frac 1q.
$
Substituting this and~\eqref{eq:MWr} into the bound for $\Delta$, we obtain
\begin{equation}\label{eq:finaleta}
\Delta
\le
(p+q)\eta+
\frac1r-\frac1{qr}+\frac{\eta}{r}+\frac{\kappa}{q}+\ve=
\frac 1p+\frac 1q-\eta+\ve.
\end{equation}
Recall that we were assuming that $\Delta> \frac 1p+\frac 1q-\eta $. But if not, then~\eqref{eq:finaleta} clearly holds.
Thus, no dyadic box can contribute with exponent more than $\frac1p+\frac1q-\eta+\ve$.
The number of dyadic boxes is $O_{p}((\log B)^{p+q+r})$, which is
absorbed into $B^\varepsilon$. Hence
\[
N(B)\ll_{\varepsilon,p}B^{\frac 1p+\frac 1q-\eta+\varepsilon},
\]
for any $\ve>0$, which thereby completes the proof. 
\end{proof}

\begin{remark}
 Theorem~\ref{prop:etagen} beats the trivial bound~\eqref{eq:triv} precisely when we can take $\delta>0$, or
equivalently when
\[
\frac1p+\frac1q-\frac1r
>
\frac1q\left(\kappa-\frac1r\right).
\]
When the required estimate is supplied by Theorem~\ref{thm:main2}, one may take
\[
\kappa
=
\max\left\{
\frac{2}{\sqrt r},
\frac1{\sqrt r}
+\min\left\{\frac2{\sqrt p},\frac13\right\}
\right\}
\leq \frac3{\sqrt r}.
\]
Since $q\geq r$, the right-hand side in the preceding criterion is
$O(r^{-3/2})$. Consequently, for every fixed $\omega>0$, the
condition
\[
\frac1p+\frac1q>\frac{1+\omega}{r}
\]
ensures a power saving as soon as $r\gg\omega^{-2}$. 
In Corollary~\ref{cor:general-campana-bound} we will see how a 
refined optimisation 
yields savings in a larger
range, including some triples with
$1/p+1/q\leq1/r$.
 \end{remark}

We now deduce some consequences of Theorem~\ref{prop:etagen}. Before doing so, we need the following technical lemma.

\begin{lemma}\label{lem:gammarho}
 Let $r\geq2$ and $\gamma_0,\dots,\gamma_{r-1}\geq0$ be such that $\sum_{i=0}^{r-1}(r+i)\gamma_i\leq1$. Let $\rho\geq 0$ and $\gamma=\max_{0\leq i\leq r-1}\gamma_i$. Then
 \[
 \sum_{i=0}^{r-1}\gamma_i -\gamma(1-\rho)\leq \max_{0\leq j\leq r-1}\frac{2(j+\rho)}{(j+1)(2r+j)}.
 \]
\end{lemma}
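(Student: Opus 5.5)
The inequality is a linear optimisation over a polytope, so we maximise the left-hand side and check that the extremal configurations give exactly the right-hand side. Fix the index $j$ at which the maximum is attained, so that $\gamma=\gamma_j$. The left-hand side is
\[
\sum_{i\neq j}\gamma_i+\rho\gamma_j.
\]
For fixed $j$ we maximise this linear functional over the polytope
\[
\Big\{\gamma_i\geq0,\ \gamma_i\leq\gamma_j\ \text{for all } i,\ \sum_i(r+i)\gamma_i\leq1\Big\}.
\]
Dropping the constraint that $j$ is the argmax only enlarges the region, and we may do so as long as the optimum we find respects it.

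Next we identify the optimal configuration. The constraint $\sum(r+i)\gamma_i\leq1$ is cheapest for small indices. Each $\gamma_i$ with $i\neq j$ has objective coefficient $1$ and cost $r+i$. The variable $\gamma_j$ has coefficient $\rho$ but, through the caps $\gamma_i\leq\gamma_j$, it effectively controls how large the others may be. A natural extremal point is therefore
\[
\gamma_0=\gamma_1=\dots=\gamma_{j'}=t,\qquad \gamma_i=0 \ \text{otherwise},
\]
for some $j'\geq0$, with the common maximum $t$ attained at some index in $\{0,\dots,j'\}$. The constraint then reads
\[
t\sum_{i=0}^{j'}(r+i)=t\,\frac{(j'+1)(2r+j')}{2}\leq1.
\]
The objective equals $(j'+\rho)t$, since $j'+1$ variables equal $t$ and one of them is the maximum, counted with weight $\rho$ instead of $1$. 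This gives the value
\[
\frac{2(j'+\rho)}{(j'+1)(2r+j')},
\]
which matches the term in the stated maximum.

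To make this rigorous, the plan is to argue as follows. Put $\gamma=\max_i\gamma_i$ and let $N$ be the number of indices with $\gamma_i>0$. Rearrange so the positive values sit at the smallest indices, in decreasing order. This keeps the multiset of values and hence the left-hand side, and it can only decrease $\sum(r+i)\gamma_i$, by the rearrangement inequality. Then, with $\gamma$ and the support size fixed, the left-hand side is $\gamma\rho+\sum_{\text{others}}\gamma_i$. We raise each nonmaximal $\gamma_i$ towards $\gamma$ while lowering $\gamma$, keeping the weighted constraint tight. Since the left-hand side is linear along this path, its maximum occurs at an endpoint. One endpoint is all positive values equal, which is the configuration above with $j'=N-1$. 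The other endpoint has fewer nonzero entries and is handled by induction on $N$.

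The main obstacle is making this exchange step clean when $\rho>1$. In that regime it might pay to concentrate mass in $\gamma$ alone, but that case is exactly $j'=0$, with value $\rho/r$, so it is still covered. A simpler alternative I would try first is LP duality. For fixed $j$, the maximum of a linear function over the polytope is attained at a vertex. The vertices of the polytope $\{0\leq\gamma_i\leq\gamma_j,\ \sum(r+i)\gamma_i\leq1\}$ are, up to scaling, the 0/1 vectors containing $j$ in their support, normalised so the constraint is tight. For support $I\ni j$ with $|I|=n$, the value is
\[
\frac{n-1+\rho}{\sum_{i\in I}(r+i)}.
\]
The denominator is minimised by $I=\{0,\dots,n-1\}$, which gives the bound with $j'=n-1$. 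Checking the vertex description of this polytope is routine, so the whole proof reduces to that enumeration.
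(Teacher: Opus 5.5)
Your proposal is correct. The LP-vertex argument you say you would try first is a complete proof once the vertex description is justified. For a fixed argmax $j$, the set $\{0\le\gamma_i\le\gamma_j\}$ is a pointed cone whose slice $\gamma_j=1$ is the cube $[0,1]^{r-1}$. Its extreme rays are therefore the $0/1$ vectors with $j$ in the support. Since $\sum_i(r+i)\gamma_i>0$ on the cone away from the origin, the truncated polytope has as vertices the origin (value $0$) together with the normalised rays. Your remark about ``dropping the constraint that $j$ is the argmax'' is muddled, since your polytope already contains $\gamma_i\le\gamma_j$, but this is harmless.

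The paper uses the same extremal configurations but organises the argument differently. It first passes to the decreasing rearrangement: the weights $r+i$ increase, so the constraint survives, and the left-hand side is unchanged. It then uses Abel summation, $\gamma_i=\sum_{j\ge i}\delta_j$ with $\delta_j=\gamma_j-\gamma_{j+1}\ge0$ and $\gamma_r=0$. This turns the left-hand side into $\sum_j(j+\rho)\delta_j$ and the constraint into $\sum_j A_j\delta_j\le1$, where $A_j=(j+1)(2r+j)/2$, and the bound follows from the mediant inequality.

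So the paper works in the cone of decreasing sequences. Its extreme rays are just the $r$ initial-segment indicators, the decomposition is written down explicitly, and no polyhedral facts are needed. You work instead in the cone attached to a fixed argmax. This avoids the rearrangement step, but the cone has $2^{r-1}$ extreme rays, so you need a second step: minimising $\sum_{i\in I}(r+i)$ over $|I|=n$, which plays the role of the rearrangement inequality.

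Your alternative exchange-and-induction sketch is the imprecise part. The path along which mass is moved is not pinned down, and different coordinates reach $\gamma$ at different times. You do not need it.
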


\begin{proof}
By the rearrangement inequality, replacing the sequence
$(\gamma_i)$ by its decreasing rearrangement cannot increase the weighted sum, 
while leaving $\sum_{i=0}^{r-1}\gamma_i$ and $\gamma$ unchanged.
Put $\gamma_r=0$ and $\delta_j=\gamma_j-\gamma_{j+1}$ for $0\leq j\leq r-1$. Then $\delta_j\geq0$. If $A_j=\sum_{i=0}^j(r+i)=(j+1)(2r+j)/2$, then $\sum_{i=0}^{r-1}(r+i)\gamma_i=\sum_{j=0}^{r-1}A_j\delta_j$, so that 
\[
\sum_{i=0}^{r-1}\gamma_i-\gamma(1-\rho)=\sum_{j=0}^{r-1}(j+\rho)\delta_j\leq \left(\max_{0\leq j\leq r-1}\frac{(j+\rho)}{A_j}\right)\sum_{j=0}^{r-1}A_j\delta_j\leq \max_{0\leq j\leq r-1}\frac{(j+\rho)}{A_j},
\]
which completes the proof.
\end{proof}

For $\theta\geq0$, define
\[
\Lambda_r(\theta)
 =
 \max_{0\leq j\leq r-1}
 \frac{2(j+\theta)}{(j+1)(2r+j)}.
\]
For $p\geq q\geq r\geq2$, put
\[
\kappa_{p,r}
 =
 \max\left\{
 \frac{2}{\sqrt r},
 \frac1{\sqrt r}
 +\min\left\{\frac2{\sqrt p},\frac13\right\}
 \right\}, \quad 
\rho_{p,q}
 =
 \max\left\{
 \frac2{\sqrt p},
 \frac1{\sqrt p}
 +\min\left\{\frac2{\sqrt q},\frac13\right\}
 \right\}.
\]
Finally, let
\begin{align*}
\eta_0(p,q,r)
 &=
 \frac{
 \displaystyle
 \frac1p+\frac1q-
 \left(
 \frac1r-\frac1{qr}+\frac{\kappa_{p,r}}q
 \right)}
 {p+q+1+1/r},\\
\eta_1(p,q,r)
& =
 \frac1{p+1}
 \min\left\{
 \frac1p+\frac{1-\rho_{p,q}}q-\Lambda_r(0),
 \frac1p+\frac1q-\Lambda_r(\rho_{p,q})
 \right\}.
\end{align*}
We are now ready to prove the following result. 

\begin{corollary}\label{cor:general-campana-bound}
Let $\ve>0$ and let $p\geq q\geq r\geq2$. Then
\[
N(B)
 \ll_{\varepsilon,p}
 B^{1/p+1/q-\eta(p,q,r)+\varepsilon},
\]
where
$\eta(p,q,r)
 =
 \max\{0,\eta_0(p,q,r),\eta_1(p,q,r)\}$.
\end{corollary}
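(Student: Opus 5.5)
The corollary combines three independent bounds, and I take the maximum of the savings. The saving $0$ is the trivial bound \eqref{eq:triv}.

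\emph{The saving $\eta_0$.} This comes from applying Theorem~\ref{prop:etagen} with $\kappa=\kappa_{p,r}$. I first verify the hypothesis of that theorem using Theorem~\ref{thm:main2}.
\begin{itemize}
\item Apply Theorem~\ref{thm:main2} to $\lambda x^p+\mu y^q-\nu z^{r+k}=0$, with exponent $r+k\geq r$ playing the distinguished role. Since $W\le \max\{X,Y\}^{1/\sqrt{r+k}}\le \max\{X,Y\}^{1/\sqrt r}$, the right-hand side is bounded by $H^\ve\max\{X,Y\}^{\kappa}$.
\item The exponent is $\kappa=\max\{2/\sqrt r,\ 1/\sqrt r+2/\sqrt{\max\{p,q,36\}},\ 1/\sqrt r+1/(r+k)\}$. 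Here $2/\sqrt{\max\{p,36\}}=\min\{2/\sqrt p,1/3\}$ because $p\ge q$. Also $1/(r+k)\le 1/\sqrt r$, so the third term is dominated by the first.
\item This gives exactly $\kappa_{p,r}$.
\end{itemize}
Theorem~\ref{prop:etagen} then yields every $\delta<\eta_0$, and the $\ve$ absorbs the strict inequality.

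\emph{The saving $\eta_1$.} This needs a variant of the proof of Theorem~\ref{prop:etagen} in which the distinguished exponent is $p$ instead of the $z$-exponent. I use the same factorisation and dyadic decomposition. I fix all variables except $x_i,y_0,z_0$ for a chosen $i$, giving the equation $\lambda x_i^{p+i}+\mu y_0^q=\nu z_0^r$.
\begin{itemize}
\item Apply Theorem~\ref{thm:main2} with $(x,p+i)$ distinguished. Then $W\le B^{(\cdot)/\sqrt{p}}$, and the result is a bound of the form $B^\ve\max\{Y,Z\}^{\rho_{p,q}}$. Here $\max\{Y,Z\}\le B^{\max\{\beta_0,\gamma_0\}}$, and $\rho_{p,q}$ arises exactly as $\kappa$ did above.
\item This gives $\Delta\le S+T+U-\alpha_i-\beta_0-\gamma_0+\rho\max\{\beta_0,\gamma_0\}+\ve$.
\item Choose $i$ with $\alpha_i$ maximal. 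Since $S\le 1/p$, and since $\Delta\le T+U$ forces $T,U$ close to their trivial sizes, the term $S-\alpha_i$ is at most $p\eta$ by the argument of \eqref{eq:Ua0}.
\end{itemize}
Rather than crudely bounding the $z$-side, I would instead bound $U-\gamma_0$ together with the $\rho$ term using Lemma~\ref{lem:gammarho}.
\begin{itemize}
\item If $\max\{\beta_0,\gamma_0\}=\gamma_0$: I want the $z$ contribution $U-\gamma_0+\rho\gamma_0$ to be $\le\Lambda_r(\rho)$. The intended route is to apply Lemma~\ref{lem:gammarho} with the roles arranged so that the fixed variable is $z_0$. I still have to check that this is legitimate: either pick $z_0$ so that it carries the maximal $\gamma$, or note that the lemma's rearrangement argument only needs the fixed index. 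The $y$-side contributes $T-\beta_0\le 1/q-\beta_0$, which combined with $\beta_0\ge0$ gives the branch $1/p+1/q-\Lambda_r(\rho)$.
\item If $\max\{\beta_0,\gamma_0\}=\beta_0$: use $T-\beta_0+\rho\beta_0\le T-(1-\rho)\beta_0$. With $T\le 1/q$ and $\beta_0\le T$, this is at most $\rho/q$, or more precisely it bounds $T$ by $(1-\rho)/q$ after rearranging. The $z$-side is bounded by $\Lambda_r(0)$ via Lemma~\ref{lem:gammarho} with $\rho=0$. This gives the branch $1/p+(1-\rho)/q-\Lambda_r(0)$.
\end{itemize}

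\emph{Closing the contradiction and the main obstacle.} Assuming $\Delta>1/p+1/q-\eta$, I get $\Delta\le p\eta+\ldots$. Solving gives $\eta\le \frac{1}{p+1}\min\{\cdots\}$, and the denominator $p+1$ comes from the single $p\eta$ loss plus $\eta$. The main obstacle is the bookkeeping in the two cases, matching exactly the two expressions in $\eta_1$ without losing an extra $\eta$ or $q\eta$ term. To avoid a $q\eta$ loss, I would keep $T$ and $U$ exact through Lemma~\ref{lem:gammarho} rather than lower-bounding them.
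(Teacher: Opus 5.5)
Your trivial-bound and $\eta_0$ steps are correct and match the paper. For $\eta_1$ your architecture is also right: make $x$ distinguished, split on which of $\beta_0$ and the free $z$-exponent is larger, and apply Lemma~\ref{lem:gammarho} with parameter $0$ or $\rho_{p,q}$. There are, however, two genuine gaps.

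\textbf{First gap: the choice of free $z$-variable.} The free $z$-variable must be $z_k$ with $\gamma_k=\gamma:=\max_j\gamma_j$, not $z_0$. Lemma~\ref{lem:gammarho} genuinely needs $\gamma$ to be the maximum, and your alternative that ``the rearrangement argument only needs the fixed index'' is false. For example, take $r=2$, $\gamma_0=0$, $\gamma_1=\frac13$. Then $\sum_i\gamma_i-\gamma_0=\frac13>\Lambda_2(0)=\frac15$.

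Choosing the maximal $\gamma_k$ costs nothing. With $x$ distinguished in $\lambda x^p+\mu y^q=\nu z^{r+k}$, Theorem~\ref{thm:main2} gives $W\le\max\{Y,Z\}^{1/\sqrt p}$ and involves $2/\sqrt{\max\{q,r+k,36\}}\le\min\{2/\sqrt q,\frac13\}$. So you still obtain $\rho_{p,q}$. Freeing $x_i$ with $\alpha_i$ maximal is harmless, since $S-\alpha_i\le S-\alpha_0\le 1-pS$, but it is unnecessary.

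\textbf{Second gap: the bookkeeping does not produce $\eta_1$.} In the case $\gamma\ge\beta_0$ you bound $T-\beta_0\le\frac1q$. Combined with $S-\alpha_0\le1-pS$ and $S\ge\Delta-\frac1q$, this yields only
\[
(p+1)\Delta\le 1+\tfrac{p+1}{q}+\Lambda_r(\rho_{p,q})+\ve,
\]
that is, a saving of $\frac1{p+1}\bigl(\frac1p-\Lambda_r(\rho_{p,q})\bigr)$. The $\frac1q$ in the numerator is lost. For $p=r+u$ that numerator is $O(r^{-3/2})$ rather than $\asymp\frac1r$. In the case $\beta_0\ge\gamma$, your claim $T-(1-\rho_{p,q})\beta_0\le\rho_{p,q}/q$ is false: take $\beta_0=0<T$.

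The missing step has two parts. First, use $T-\beta_0\le 1-qT$, together with $\rho_{p,q}\beta_0\le\rho_{p,q}/q$ when $\beta_0\ge\gamma$, to get
\[
\Delta\le 2-pS-qT+E+\ve .
\]
Here $E=\Lambda_r(0)+\rho_{p,q}/q$ in the case $\beta_0\ge\gamma$, and $E=\Lambda_r(\rho_{p,q})$ in the case $\gamma>\beta_0$. Second, use
\[
pS+qT=p(S+T)-(p-q)T\ge p\Delta-\tfrac{p-q}{q},
\]
which follows from $\Delta\le S+T$ in \eqref{eq:triv'} and $T\le\frac1q$. This gives $(p+1)\Delta\le 1+\frac pq+E+\ve$ directly, which is exactly the two entries of $\eta_1$. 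No contradiction hypothesis is needed and there is no $q\eta$ loss.

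Relatedly, the lower bound $S\ge\frac1p-\eta$ you need for \eqref{eq:Ua0} comes from $\Delta\le S+T$, not from $\Delta\le T+U$.
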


\begin{proof}
The upper bound 
$N(B)
 \ll_{p}
 B^{1/p+1/q}$ follows from~\eqref{eq:triv}.
We next apply Theorem~\ref{thm:main2} directly to
\[
\lambda x^p+\mu y^q=\nu z^{r+k},
\]
for $0\leq k\leq r-1$. Putting $M=\max\{X,Y\}$, we see that
$W\leq M^{1/\sqrt r}$ in~\eqref{eq:W}. Hence Theorem~\ref{thm:main2}
gives
\[
\#\left\{(x,y,z)\in \mathbb N^3:
\begin{array}{l}
x\le X,\ y\le Y,\ z\le Z,\ \gcd(x,y,z)=1,\\
\lambda x^p+\mu y^q=\nu z^{r+k}
\end{array}
\right\}
\ll_{\varepsilon,p}
H^{\ve}\max\{X,Y\}^{\kappa_{p,r}},
\]
where $H=\max\{X,Y,Z\}$,
since the term involving
$M^{1/(r+k)}$ is absorbed by $M^{2/\sqrt r}$.
It now follows from Theorem~\ref{prop:etagen} that 
\[
N(B)
 \ll_{\varepsilon,p}
 B^{1/p+1/q-\eta_0(p,q,r)+\varepsilon}.
\]

We now obtain a second estimate by permuting the variables and
applying Theorem~\ref{thm:main2} with $x$ as the distinguished
variable. Uniformly for $0\leq k\leq r-1$, this gives
\[
\#\left\{(x,y,z)\in \mathbb N^3:
\begin{array}{l}
x\le X,\ y\le Y,\ z\le Z,\ \gcd(x,y,z)=1,\\
\lambda x^p+\mu y^q=\nu z^{r+k}
\end{array}
\right\}
\ll_{\varepsilon,p}
H^{\ve}\max\{Y,Z\}^{\rho_{p,q}}.
\]
We now rework the proof of 
Theorem~\ref{prop:etagen} with this alternative input. Thus $S,T,U$ denote the
sums of the dyadic exponents, as in~\eqref{eq:UVW}. 
Let
\[
\gamma=\max_{0\leq k\leq r-1}\gamma_k
\]
and choose $k$ such that $\gamma_k=\gamma$. After fixing every
variable except $x_0,y_0,z_k$, the preceding estimate gives
\[
\Delta
 \leq
 S+T+U-\alpha_0-\beta_0-\gamma
 +\rho_{p,q}\max\{\beta_0,\gamma\}
 +\varepsilon.
\]

Suppose first that $\gamma\leq\beta_0$. Using
\[
S-\alpha_0\leq1-pS,\qquad
T-\beta_0\leq1-qT,\qquad
\beta_0\leq\frac1q,
\]
we obtain
$\Delta
 \leq
 2-pS-qT+(U-\gamma)+\frac{\rho_{p,q}}q+\varepsilon$.
Since
\[
pS+qT
 =
 p(S+T)-(p-q)T
 \geq
 p(S+T)-\frac{p-q}{q}
\]
and $\Delta\leq S+T$, by~\eqref{eq:triv'}, 
it follows from Lemma~\ref{lem:gammarho}
that
\begin{align*}
\Delta
 &\leq
 \frac{
 2+\Lambda_r(0)+(p-q+\rho_{p,q})/q
 }{p+1}
 +\varepsilon\\
 &=
\frac1p+\frac1q
 -
 \frac1{p+1}
 \left(
 \frac1p+\frac{1-\rho_{p,q}}q-\Lambda_r(0)
 \right)
 +\varepsilon.
\end{align*}

Suppose now that $\beta_0<\gamma$. In this case,
\[
\Delta
 \leq
 2-pS-qT+(U-\gamma+\rho_{p,q}\gamma)+\varepsilon.
\]
The same argument and Lemma~\ref{lem:gammarho} combine to give
\begin{align*}
\Delta
& \leq
 \frac{
 2+\Lambda_r(\rho_{p,q})+(p-q)/q
 }{p+1}
 +\varepsilon\\
 &=
\frac1p+\frac1q
 -
 \frac1{p+1}
 \left(
 \frac1p+\frac1q-\Lambda_r(\rho_{p,q})
 \right)
 +\varepsilon.
\end{align*}
Thus every dyadic box contributes at most
\[
\ll_{\ve,p}B^{1/p+1/q-\eta_1(p,q,r)+\varepsilon}.
\]
The number of boxes is a power of $\log B$, which is absorbed
into $B^\varepsilon$, and which thereby completes the proof.
\end{proof}

\begin{proof}[Proof of Theorem~\ref{t:uv-main}]
For fixed integers $u\geq v\geq0$, we have 
\[
\rho_{r+u,r+v}
 =
 \frac3{\sqrt r}+O_{u,v}(r^{-3/2}).
\]
The function $\phi_\theta(t)= \frac{2(t+\theta)}{(t+1)(2r+t)}$ has its stationary point at 
$t=-\theta+\sqrt{(1-\theta)(2r-\theta)}$. Thus, uniformly for $\theta=O(r^{-1/2})$, the maximising integer is $t=\sqrt{2r}+O(1)$, and substitution gives
$\Lambda_r(\theta)
 =
 \frac1r+O(r^{-3/2})$.
It follows that each of the two quantities inside the minimum
defining $\eta_1(r+u,r+v,r)$ is
$\frac1r+O_{u,v}(r^{-3/2})$.
Since $p+1=r+O_{u,v}(1)$, we conclude that
\[
\eta_1(r+u,r+v,r)
 =
 \frac1{r^2}+O_{u,v}(r^{-5/2}),
\]
which is positive for all sufficiently large $r$. The result now follows from 
Corollary~\ref{cor:general-campana-bound}.
\end{proof}

\begin{remark}\label{rem:rem'}
For $p=r+2$ and $q=r+1$, one has
$\eta_0(p,q,r)>0$ for every $r\geq6$. For the remaining values,
putting $\eta_i=\eta_i(r+2,r+1,r)$,
direct calculation gives
\[
\begin{array}{c|cc}
 r & \eta_0 & \eta_1 \\ \hline
 2 & <0 & \frac1{100} \\[4pt]
 3 & <0 & 
 \frac{17}{360}-\frac{\sqrt5}{60} \\[6pt]
 4 & <0 & 
 \frac{38}{1155}-\frac{\sqrt6}{105} \\[6pt]
 5 & <0 & 
 \frac{53}{2184}-\frac{\sqrt7}{168}
\end{array}
\]
In particular, $\eta(p,q,r)>0$ for every $r\geq2$ and our bound for $N(B)$ is
always non-trivial.
\end{remark}

\end{document}